\newtheorem{Theorem}{Theorem } [section]
\newtheorem{lemma}[Theorem]{Lemma}
\newtheorem{remark}{Remark}
\numberwithin{equation}{section}
\newcommand{\veee}{{\,\widecheck{\;}\,}}
\font\ff=cmsy10
\def\tiF{\text{\ff F\kern 0pt}{\;}^{ -1}}
\def\tF{\text{\ff F\kern 0pt}}
\begin{document}
\title[]{The Cauchy problem for a fifth order KdV equation\\ in weighted Sobolev spaces}
\author{Eddye Bustamante, Jos\'e Jim\'enez and Jorge Mej\'{\i}a}
\subjclass[2000]{35Q53, 37K05}

\address{Eddye Bustamante M., Jos\'e Jim\'enez U., Jorge Mej\'{\i}a L. \newline
Departamento de Matem\'aticas\\Universidad Nacional de Colombia\newline
A. A. 3840 Medell\'{\i}n, Colombia}
\email{eabusta0@unal.edu.co, jmjimene@unal.edu.co, jemejia@unal.edu.co}

\begin{abstract}
In this work we study the initial value problem (IVP) for the fifth order KdV equations,
\begin{align*}
\partial_{t}u+\partial_{x}^{5}u+u^k\partial_{x}u=0,\text{} & \quad x,t\in \mathbb R, \quad k=1,2,\\
\end{align*}
in weighted Sobolev spaces $H^s(\mathbb R)\cap L^2(\langle x \rangle^{2r}dx)$. We prove local and global results. In the case $k=2$ we point out the relation between decay and regularity of the solution of the IVP.
\end{abstract}

\maketitle

\section{Introduction}\label{int}

In this article we will consider the IVP
\begin{align}
\left. \begin{array}{rr}
\partial_{t}u+\partial_{x}^{5}u+u^k\partial_{x}u=0,\text{} & \quad x,t\in \mathbb R\\
u(0)=u_0 & 
\end{array}
\right\} ,\label{KdV}
\end{align}

with $k=1,2$. When $k=1$ we will refer to this problem as the IVP for the fifth order KdV equation. When $k=2$ we will refer to this problem as the IVP for the modified fifth order KdV equation.

When $k=1$ the equation was proposed by Kakutani and Ono as a model for magneto-acoustic waves in plasma physics (see \cite{KO}).

The equations that we will study are included in the class
\begin{align}\label{horder}
\partial_t u+\partial_x^{2j+1} u+P(u,\partial_x u,\cdots, \partial_x^{2j}u)=0,\quad x,t\in\mathbb R,\quad j\in \mathbb Z^+,
\end{align}
where $P:\mathbb R^{2j+1}\to\mathbb R$ (or $P: \mathbb C^{2j+1}\to\mathbb C$) is a polynomial having no constant or linear terms, i.e.
$$P(z)=\sum_{|\alpha|=l_0}^{l_1} a_\alpha z^\alpha\text{ with $l_0\geq 2$ and $z=(z_1,\cdots, z_{2j+1})$}.$$
The class in \eqref{horder} generalizes several models, arising in both mathematics and physics, of higher-order nonlinear dispersive equations.

During many years the well-posedness of these IVP has been studied in the context of the classical Sobolev spaces $H^s(\mathbb R)$. In particular, fifth order KdV equations with more general non-linearities, than those we are considering, were studied in \cite{P}, \cite{L}, \cite{Kw}, \cite{Kw1}, \cite{GKK} and \cite{KP}.

In 1983 Kato, in \cite{K}, studied the IVP for the generalized KdV equation in several spaces, besides the classical Sobolev spaces. Among them, Kato considered weighted Sobolev spaces.

In this work we will be concerned with the well-posedness of the IVP \eqref{KdV} in weighted Sobolev spaces. This type of spaces arises in a natural manner when we are interested in determining if the Schwartz space is preserved by the flow of the evolution equation in \eqref{KdV}.

In \cite{KPVHigher} Kenig, Ponce and Vega studied the IVP associated to the equation \eqref{horder} in weighted Sobolev spaces $H^s(\mathbb R)\cap L^2(|x|^m dx)$, with $m$ positive integer.

Some relevant nonlinear evolution equations as the KdV equation, the non-linear Schrödinger equation and the Benjamin-Ono equation, have also been studied in the context of weighted Sobolev Spaces (see \cite{N}, \cite{NP1}, \cite{NP2}, \cite{IO}, \cite{IO1}, \cite{FP}, \cite{FLP1}, \cite{FLP2}, \cite{FLP}, \cite{J} and \cite{CP} and references therein).

We will study real valued solutions of the IVP \eqref{KdV} in the weighted Sobolev spaces
$$Z_{s,r}:=H^s(\mathbb R)\cap L^2(\langle x\rangle^{2r} dx),$$
where $\langle x \rangle:=(1+x^2)^{1/2}$, and $s,r\in\mathbb R$.

The relation between the indices $s$ and $r$ for the IVP \eqref{KdV} can be found, after the following considerations, contained in the work of Kato:\\
Suppose we have a solution $u\in C([0,\infty); H^s(\mathbb R))$ to the IVP \eqref{KdV} for some $s\geq 2$. We want to estimate $(pu,u)$, where $p(x):=\langle x\rangle^{2r}$ and $(\cdot,\cdot)$ is the inner product in $L^2(\mathbb R)$. Proceeding formally we multiply the equation in \eqref{KdV} by $up$, integrate over $x\in\mathbb R$ and apply integration by parts to obtain:
\begin{align}
\dfrac{d}{dt} (pu,u)=5(p^{(1)} \partial_x^2 u, \partial_x^2 u)-5(p^{(3)}\partial_x u,\partial_x u)+(p^{(5)}u,u)+\dfrac 2{k+2}(p^{(1)} u^{k+2},1).\label{p}
\end{align}
To see that $(pu,u)$ is finite and bounded in $t$, we must bound the right-hand side in \eqref{p} in terms of $(pu,u)$ and $\| u\|_{H^s}^2$. The most difficult term to control in the right-hand side in \eqref{p} is $5(p^{(1)}\partial_x^2 u,\partial_x^2 u)$. Using the interpolation Lemma \ref{uno} (see section 2), for $\theta\in [0,1]$ and $u\in Z_{s,r}$ we have
$$\| \langle x \rangle^{(1-\theta)r} u \|_{H^{\theta s}}\leq C \| \langle x \rangle^r u \|_{L^2}^{1-\theta}\|u \|_{H^s}^\theta.$$
The term $5(p^{(1)}\partial_x^2 u,\partial_x^2 u)$ can be controlled when $\theta s=2$ if $p^{(1)}(x)\sim \langle x \rangle^{2(1-\theta)r}$. Taking into account that $p^{(1)}(x)\sim \langle x\rangle^{2r-1}$, we must require that $2r-1=2(1-\theta)r$ and $\theta s=2$, which leads to $s=4r$. In this way the natural weighted Sobolev space to study the IVP \eqref{KdV} is $Z_{4r,r}$.

Now, we will describe the main results of this work.

With respect to the IVP \eqref{KdV} with $k=1$ we establish local well-posedness (LWP) in $Z_{4r,r}$ for $\frac 5{16}<r<\frac 12$ and global well-posedness (GWP) in $Z_{4r,r}$, for $r\geq \frac 12$.

In the first case ($\frac 5{16}<r<\frac 12$), we use the known linear estimates for the group associated to the linear part of the equation, which were obtained by Kenig, Ponce and Vega in \cite{KPVOsc}, \cite{KPVWell1} and \cite{KPVWell}, and a pointwise formula for the group, related with fractional weights, which was deduced by Fonseca, Linares and Ponce in \cite{FLP}. These ingredients allow us to use a contraction principle in an adequate subspace of $C([0,T]; Z_{4r,r})$ to the integral equation associated to our IVP, to prove LWP in $Z_{4r,r}$.

In the second case ($r\geq\frac 12$) we use the LWP of the IVP \eqref{KdV} in the context of the Sobolev spaces $H^{4r}(\mathbb R)$, which can be obtained in a similar fashion, as it was done by Kenig, Ponce and Vega in \cite{KPVWell1} and \cite{KPVWell} for the KdV equation, to get a solution $u\in C([0,T];H^{4r}(\mathbb R))$. Then we perform a priori estimates on the the differential equation in order to prove that if the initial data belongs to $H^{4r}(\mathbb R)\cap L^2(\langle x \rangle^{2r} dx )$ then necessarily $u\in L^\infty([0,T];L^2(\langle x \rangle^{2r} dx)$. In this step of the proof we apply the interpolation inequality (Lemma \ref{uno} in section 2), mentioned before, which was proved in \cite{FP}. Finally, we give the proof of the continuous dependence of the solution on the initial data in $Z_{4r,r}$.

With respect to the IVP \eqref{KdV} with $k=2$, we establish LWP and GWP in $Z_{2,1/2}$. For the LWP, again, the idea of the proof is to apply the contraction principle to the integral equation associated to the IVP, in a certain subspace of $C([0,T];H^2(\mathbb R))$, in which we consider additional mixed space-time norms, suggested by the linear estimates of the group. This way, we obtain, firstly, a solution in $C([0,T];H^2(\mathbb R))$. Then, proceeding as in the IVP \eqref{KdV} with $k=1$, in the case $r\geq 1/2$, we can affirm that $u\in C([0,T];Z_{2,1/2})$ and that the IVP \eqref{KdV} with $k=2$ is LWP in $Z_{2,1/2}$.

To deduce GWP results from LWP results we use the following conservation laws for the solutions of the IVP \eqref{KdV} (see \cite{KPVOsc}):
\begin{align}
I_1(t)&:=\int_\mathbb R u^2(t) dx=I_1(0), \text{ for } k=1,2,\label{cl1}\\
I_2^1(t)&:=\dfrac 16 \int_\mathbb R u^3(t) dx+\dfrac 12 \int_\mathbb R (\partial_x^2 u)^2(t) dx=I_2^1(0), \text{ for }k=1, \text{ and},\label{cl2}\\
I_2^2(t)&:=\dfrac 1{12} \int_\mathbb R u^4(t) dx+ \int_\mathbb R (\partial_x^2 u)^2(t) dx=I_2^2(0), \text{ for } k=2.\label{mcl2}
\end{align}
In \cite{ILP}, Isaza, Linares and Ponce showed that there exists a relation between decay and regularity for the solutions of the KdV equation in $L^2(\mathbb R)$. More precisely, they proved that if $u\in C(\mathbb R;L^2(\mathbb R))$ is the global solution of the equation
$$\partial_t u+\partial_x^3 u+u\partial_x u=0,$$
obtained in the context of the Bourgain spaces (see \cite{KPVBil}), and there exists $\alpha>0$ such that in two different times $t_0,t_1\in\mathbb R$
$$|x|^\alpha u(t_0), |x|^\alpha u(t_1)\in L^2(\mathbb R),$$
then $u\in C(\mathbb R,H^{2\alpha}(\mathbb R))$. To achieve this goal, they chose a functional setting, where the norm $\|\partial_x u \|_{L^\infty(\mathbb R; L^2([0,T]))}$ of the solution $u$ depends continuously on the initial data in $L^2(\mathbb R)$.

Following the method used in \cite{ILP}, and taking into account that the norm $\|\partial_x^4 u \|_{L^\infty(\mathbb R; L^2([0,T]))}$ of the solution $u$ of the IVP \eqref{KdV} with $k=2$, depends continuously on the initial data in $Z_{2,1/2}$, we prove that if $u\in C([0,T];Z_{2,1/2})$ is a solution of the IVP \eqref{KdV} with $k=2$ and, for some $\alpha \in (0,1/8]$, there exist two different times $t_0,t_1\in[0,T]$ such that $|x|^{1/2+\alpha} u(t_0)$ and $|x|^{1/2+\alpha} u(t_1)$ are in $L^2(\mathbb R)$ then $u\in C([0,T];H^{2+4\alpha}(\mathbb R))$.

Before stating in a precise manner the main results of this article, let us explain the notation for mixed space-time norms. For $f:\mathbb R\times [0,T]\to\mathbb R$ (or $\mathbb C$) we have
$$\|f \|_{L^p_x L^q_T}:=\left( \int_{\mathbb R} \left( \int_0^T |f(x,t)|^q dt\right)^{p/q} dx \right )^{1/p}.$$
When $p=\infty$ or $q=\infty$ we must do the obvious changes with \textit{essup}. Besides, when in the space-time norm appears $t$ instead of $T$, the time interval is $[0,+\infty)$.

Our results are as follows:
\begin{Theorem}\label{prin1}
Let  $r>\frac{5}{16}$ and $u_0\in Z_{4r,r}$. Then there exist $T>0$ and a unique $u$, solution of the IVP \eqref{KdV} with $k=1$ such that
\begin{align}
u\in C([0,T];&Z_{4r,r}),\label{con1}\\
\| \partial_x u \|_{L^4_T L^\infty_x}&<\infty,\label{con2}\\
\|D_x^{4r}\partial_x u\|_{L_x^{\infty}L_T^2}&<\infty,\quad\text{and}\label{con3}\\
\|u\|_{L_x^2L_T^{\infty}}&<\infty.\label{H1}
\end{align}
Moreover, for any $T'\in(0,T)$ there exists a neighborhood $V$ of $u_0$ in $Z_{4r,r}$ such that the data-solution map $\tilde{u}_0\mapsto\tilde{u}$ from $V$ into the class defined by \eqref{con1}-\eqref{H1} with $T'$ instead of $T$ is Lipschitz.

When $5/16<r<1/2$, $T$ depends on $\|u_0 \|_{Z_{4r,r}}$, and when $r\geq 1/2$ the size of $T$ depends only on $\| u_0\|_{H^{4r}}$.
\end{Theorem}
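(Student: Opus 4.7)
The plan is to apply the Banach contraction principle to the Duhamel map
\[
\Phi(u)(t) := U(t) u_0 - \int_0^t U(t-t')(u\partial_x u)(t')\, dt',
\]
where $U(t) := e^{-t\partial_x^5}$ is the unitary group of the linearized equation, and then extract continuous dependence from the Lipschitz estimate on $\Phi$. The topology in which $\Phi$ is to be a contraction is dictated by the Kenig--Ponce--Vega linear estimates (smoothing, Strichartz-type and maximal function bounds from \cite{KPVOsc,KPVWell1,KPVWell}) and precisely matches the four norms \eqref{con1}--\eqref{H1}. I would split the argument into the two regimes $5/16<r<1/2$ and $r\geq 1/2$, because the second regime has enough $H^s$-regularity to decouple the weighted-$L^2$ estimate from the fixed-point scheme.

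For the first regime I would work on a closed ball in the Banach space defined by \eqref{con1}--\eqref{H1}, measuring $\Phi(u)$ in each of those four norms. The $H^{4r}$ contributions to $U(t)u_0$ are controlled by the KPV linear estimates in terms of $\|u_0\|_{H^{4r}}$. The weighted component requires commuting $\langle x\rangle^r$ through $U(t)$; this is where the pointwise identity for fractional weights proved by Fonseca--Linares--Ponce in \cite{FLP} enters, yielding a remainder that, once combined with the smoothing norm \eqref{con3}, is controlled by $\|u_0\|_{Z_{4r,r}}$ precisely for $r>5/16$. The Duhamel term is then handled by a fractional Leibniz rule in tandem with H\"older in the mixed space-time norms: the nonlinearity $u\partial_x u$ splits as $\|u\|_{L^2_x L^\infty_T}\|\partial_x u\|_{L^\infty_x L^2_T}$, and, for the weighted piece, as $\|\langle x\rangle^r u\|_{L^\infty_T L^2_x}\|\partial_x u\|_{L^4_T L^\infty_x}$, each carrying a small prefactor in $T$ that produces the contraction.

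For $r\geq 1/2$ I would first establish LWP in $H^{4r}(\mathbb R)$ by the same KPV scheme, obtaining a solution $u\in C([0,T];H^{4r}(\mathbb R))$ whose lifespan $T$ depends only on $\|u_0\|_{H^{4r}}$ and which already satisfies \eqref{con2}--\eqref{H1}. To promote this to $Z_{4r,r}$, I would multiply \eqref{KdV} by $\langle x\rangle^{2r}u$, integrate by parts, and reach the energy identity \eqref{p}. The critical term $5(p^{(1)}\partial_x^2 u,\partial_x^2 u)$, with $p^{(1)}\sim\langle x\rangle^{2r-1}$, is treated via Lemma \ref{uno} with $\theta=1/(2r)$ (so that $\theta s=\theta\cdot 4r=2$ and $(1-\theta)r$ matches the weight in $p^{(1)}$), giving
\[
|5(p^{(1)}\partial_x^2 u,\partial_x^2 u)| \leq C\,\|\langle x\rangle^r u\|_{L^2}^{2(1-\theta)}\|u\|_{H^{4r}}^{2\theta}.
\]
The remaining terms of \eqref{p} are lower order ($p^{(3)}$ and $p^{(5)}$ are bounded, and the cubic term is controlled by Sobolev embedding). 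A Gronwall argument on $[0,T]$ then gives $u\in L^\infty([0,T];L^2(\langle x\rangle^{2r}dx))$, and strong continuity in time follows by an approximation-and-weak-convergence argument in the style of \cite{FLP}.

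The main obstacle lies in the first regime: the fractional weight $\langle x\rangle^r$ with $r<1/2$ does not admit a clean pointwise energy estimate, so it must be transported through $U(t)$ and through the bilinear operator $u\mapsto u\partial_x u$ by means of the Fonseca--Linares--Ponce identity; the threshold $r>5/16$ appears precisely to make the resulting remainders absorbable into the smoothing norm \eqref{con3} with a factor that vanishes as $T\to 0$. The Lipschitz continuity of the data-solution map stated in the theorem then follows by applying the same estimates to the difference of two solutions on a common ball and invoking standard arguments to upgrade the closed-ball Lipschitz bound to the stated neighborhood $V$ of $u_0$.
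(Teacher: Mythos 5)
Your proposal follows essentially the same route as the paper: a contraction in the norms \eqref{con1}--\eqref{H1} augmented by $\||x|^r u\|_{L^\infty_T L^2_x}$ for $5/16<r<1/2$, with the Fonseca--Linares--Ponce pointwise formula \eqref{FW}--\eqref{FW1} handling the weight on the linear group and the splitting $\|\langle x\rangle^r v\|_{L^\infty_T L^2_x}\|\partial_x v\|_{L^4_T L^\infty_x}$ handling the weighted Duhamel term; and, for $r\geq 1/2$, the weighted energy identity plus Lemma \ref{uno} with $\theta=1/(2r)$ and Gronwall. Two small corrections: the threshold $r>5/16$ does not arise from absorbing weighted remainders into \eqref{con3} --- it is simply the condition $4r>5/4$ forced by the maximal-function estimate \eqref{le4} in the unweighted contraction (the FLP identity works for all $r\in(0,1)$); and the energy identity must be derived with the truncated weights $w_N$ and smooth approximating solutions before passing to the limit, since one cannot pair the equation with the unbounded weight $\langle x\rangle^{2r}$ until the weighted norm is known to be finite.
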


Let us recall that the operator $D$ is defined through the Fourier transform by the multiplier $|\xi|$.
\begin{remark}\label{R1}\text{}
\begin{enumerate}
\item[(a)] From the proof of Theorem \ref{prin1} it is clear that if the IVP \eqref{KdV} is GWP in $H^{4r}(\mathbb R)$, $r\geq\frac12$, then the IVP is also GWP in $Z_{4r,r}$.
\item[(b)] Using the regularity property in Theorem \ref{principal} it follows, from Theorem \ref{prin1}, that the IVP \eqref{KdV} is LWP in $Z_{s,r}$ for $s\geq 4r$ and $r\geq\frac12$.
\item[(c)] Let us observe that applying the same method used in the proof of Theorem \ref{prin1} it can be seen that the IVP \eqref{KdV} is LWP in $Z_{s,l}$ with $s\geq 4r$, $l\leq r$ and $r\geq 1/2$.
\end{enumerate}
\end{remark}

\begin{Theorem}\label{prin2}
Let $r\geq 1/2$ and $u_0\in Z_{4r,r}$. Then the IVP \eqref{KdV} for the fifth order KdV equation ($k=1$) is GWP in $Z_{4r,r}$.
\end{Theorem}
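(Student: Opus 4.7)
The plan is to combine the local theory of Theorem~\ref{prin1} with the conservation laws \eqref{cl1} and \eqref{cl2} and, when $r>1/2$, with a standard persistence-of-regularity argument. By part~(a) of Remark~\ref{R1} it suffices to prove global well-posedness of \eqref{KdV} with $k=1$ in $H^{4r}(\mathbb R)$ for every $r\geq 1/2$. Since the local existence time supplied by Theorem~\ref{prin1} in the regime $r\geq 1/2$ depends only on $\|u_0\|_{H^{4r}}$, a uniform a priori bound on $\|u(t)\|_{H^{4r}}$ on any interval $[0,T]$ is enough to iterate the local theory and extend the solution globally.

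For the base case $r=1/2$, that is, $H^2$, I would use \eqref{cl1} to get $\|u(t)\|_{L^2}=\|u_0\|_{L^2}$ and rewrite \eqref{cl2} as
$$\tfrac12\|\partial_x^2 u(t)\|_{L^2}^2 \;=\; I_2^1(0) - \tfrac16\int_\R u^3(t)\,dx.$$
The one-dimensional Gagliardo--Nirenberg inequality $\|f\|_{L^3}^3\leq C\|f\|_{L^2}^{11/4}\|\partial_x^2 f\|_{L^2}^{1/4}$, combined with Young's inequality, absorbs a small multiple of $\|\partial_x^2 u(t)\|_{L^2}^2$ into the left-hand side, yielding $\sup_t\|u(t)\|_{H^2}\leq C(\|u_0\|_{H^2})$, which is the desired a priori bound.

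For $r>1/2$ I would run a Kato--Ponce-type energy estimate. Setting $s=4r$ and $J^s=(1-\partial_x^2)^{s/2}$, applying $J^s$ to the equation, pairing with $J^s u$ in $L^2$, and integrating by parts (so that the linear dispersive term drops out), the Kato--Ponce commutator estimate together with one more integration by parts gives
$$\tfrac12\tfrac{d}{dt}\|J^s u(t)\|_{L^2}^2 \;\leq\; C\,\|\partial_x u(t)\|_{L^\infty}\,\|J^s u(t)\|_{L^2}^2.$$
Because the previous step gives a global $H^2$ bound and $H^2\hookrightarrow W^{1,\infty}$ in one dimension, the quantity $\|\partial_x u(t)\|_{L^\infty}$ is bounded uniformly in $t$, so Gronwall's inequality yields an exponential-in-$t$ bound for $\|u(t)\|_{H^s}$, which is all we need.

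The only delicate point is that the energy identities above are only formal at the regularity $H^{4r}$, especially for non-integer $s=4r$: to make them rigorous one must approximate $u_0$ by Schwartz data, derive the identities for the resulting classical solutions, and pass to the limit using the continuous dependence statement of Theorem~\ref{prin1}. I expect this approximation/limit bookkeeping to be the main technical obstacle; the Gagliardo--Nirenberg, Kato--Ponce, and Gronwall steps themselves are entirely classical. Once it is in place, the bootstrap from $H^2$ to $H^{4r}$ and the translation from $H^{4r}$ to $Z_{4r,r}$ via Remark~\ref{R1}(a) close the proof.
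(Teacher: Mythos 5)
Your argument is correct, and its skeleton --- reduce to $H^{4r}(\mathbb R)$ via Remark \ref{R1}(a), obtain a uniform a priori $H^2$ bound from the conservation laws \eqref{cl1} and \eqref{cl2}, and iterate the local theory whose existence time depends only on the relevant Sobolev norm --- is exactly the paper's. Where you diverge is in how the higher regularity $4r>2$ is propagated: you run a Kato--Ponce commutator energy estimate, $\tfrac{d}{dt}\|J^{4r}u\|_{L^2}^2\le C\|\partial_x u\|_{L^\infty}\|J^{4r}u\|_{L^2}^2$, and close with Gronwall using $H^2\hookrightarrow W^{1,\infty}$, whereas the paper simply invokes Remark \ref{R1}(b), i.e.\ the regularity (persistence) property built into Theorem \ref{principal}: an $H^{s'}$ datum with $s'>2$ produces a solution in $H^{s'}$ on the \emph{same} time interval $[0,T]$, $T=T(\|u_0\|_{H^2})$, as the $H^2$ solution, so globality in $H^2$ transfers to $H^{4r}$ with no further energy estimate and no approximation bookkeeping at fractional regularity. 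Your route is more self-contained and classical but carries the extra burden you yourself identify (justifying the formal identities for non-integer $s=4r$ by smoothing the data and passing to the limit); the paper's route is shorter because the needed persistence is already packaged into the linear/local theory. Your $H^2$ step is also sound --- the Gagliardo--Nirenberg exponents in $\|f\|_{L^3}^3\le C\|f\|_{L^2}^{11/4}\|\partial_x^2 f\|_{L^2}^{1/4}$ are correct and Young's inequality absorbs the second-derivative factor, which is the same absorption the paper performs via $\|u\|_{L^\infty}\|u\|_{L^2}^2\le C\|u\|_{H^1}\|u_0\|_{L^2}^2$ --- so the proof closes either way.
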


\begin{Theorem} \label{tmkdv1}
Let $u_0\in Z_{2,1/2}$. Then there exist $T=T(\| u_0\|_{H^2})>0$ and a unique $u$, solution of the IVP \eqref{KdV} for the modified fifth order KdV equation (k=2), such that
\begin{align}
u\in C([0,T];&Z_{2,1/2})\,,\label{mdes11}\\
\| \partial^4_x u\|_{L^\infty_x L^2_T}&<\infty\,, \label{mdes12}\\
\|  u\|_{L^{16/5}_x L^\infty_T}&<\infty\,, \label{mdes13}\\
\|  u\|_{L^4_x L^\infty_T}&<\infty. \label{mdes14}
\end{align}
Moreover, for any $T'\in(0,T)$ there exists a neighborhood $V$ of $u_0$ in $Z_{2,1/2}$ such that the data-solution map $\tilde{u}_0\mapsto\tilde{u}$ from $V$ into the class defined by \eqref{mdes11}-\eqref{mdes14} with $T'$ instead of $T$ is Lipschitz.
\end{Theorem}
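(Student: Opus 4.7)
The strategy is the one outlined in the introduction. In Step~1 we apply the contraction principle to the Duhamel formulation in a subspace of $C([0,T];H^2(\mathbb R))$ equipped with the three mixed space-time norms \eqref{mdes12}--\eqref{mdes14}, obtaining a solution of \eqref{KdV} with $k=2$ that lies in $C([0,T];H^2)$ and satisfies those three bounds. In Step~2 we upgrade to the weighted space $Z_{2,1/2}$ through a direct a priori estimate on the equation, recovering \eqref{mdes11}. Uniqueness in the class \eqref{mdes11}--\eqref{mdes14} and the Lipschitz dependence on the datum follow from the contraction estimates.

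\textbf{Step 1 (contraction in $H^2$).} Let $U(t)=e^{-t\partial_x^5}$. For $M,T>0$ set
$$X_T^M=\bigl\{u\in C([0,T];H^2(\mathbb R)) : \Lambda_T(u)\le M\bigr\},$$
with $\Lambda_T(u)$ equal to the sum of $\|u\|_{L^\infty_T H^2_x}$, $\|\partial_x^4 u\|_{L^\infty_x L^2_T}$, $\|u\|_{L^{16/5}_x L^\infty_T}$ and $\|u\|_{L^4_x L^\infty_T}$. The linear estimates of Kenig--Ponce--Vega (local smoothing gaining four derivatives in $L^\infty_x L^2_T$, maximal-function bounds in $L^q_x L^\infty_T$, and the $H^2$ conservation of $U(t)$) give $\Lambda_T(U(\cdot)u_0)\le C_0\|u_0\|_{H^2}$. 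Consider
$$\Phi(u)(t)=U(t)u_0-\int_0^t U(t-t')\bigl(u^2\partial_x u\bigr)(t')\,dt'.$$
Dualising the smoothing estimate reduces the bound of $\Phi$ in $\Lambda_T$ to estimates of $u^2\partial_x u$ in $L^1_x L^2_T$ (and the analogous $L^1_T L^2_x$). By H\"older in $x$ and $t$, placing two factors of $u$ in \eqref{mdes13}--\eqref{mdes14} and interpolating $\partial_x u$ in the remaining mixed norm between $\|\partial_x^4 u\|_{L^\infty_x L^2_T}$ and $\|\partial_x u\|_{L^\infty_T L^2_x}\le\|u\|_{L^\infty_T H^2}$, one gets
$$\|u^2\partial_x u\|_{L^1_x L^2_T}\le C T^{\theta}\,\Lambda_T(u)^3$$
for some $\theta>0$, and an analogous Lipschitz inequality with a factor $\Lambda_T(u-v)(\Lambda_T(u)+\Lambda_T(v))^2$. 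Choosing $M=2C_0\|u_0\|_{H^2}$ and $T=T(\|u_0\|_{H^2})$ small enough, $\Phi$ is a strict contraction on $X_T^M$; its fixed point $u$ belongs to $C([0,T];H^2(\mathbb R))$ and satisfies \eqref{mdes12}--\eqref{mdes14}.

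\textbf{Step 2 (persistence of the weight).} Put $p(x)=\langle x\rangle$, corresponding to $r=1/2$ in \eqref{p}. Multiplying \eqref{KdV} with $k=2$ by $pu$ and integrating by parts reproduces
$$\frac{d}{dt}(pu,u)=5(p^{(1)}\partial_x^2 u,\partial_x^2 u)-5(p^{(3)}\partial_x u,\partial_x u)+(p^{(5)}u,u)+\tfrac{1}{2}(p^{(1)}u^4,1).$$
All derivatives $p^{(j)}$ with $j\ge 1$ belong to $L^\infty(\mathbb R)$, so by the Sobolev embedding $H^2(\mathbb R)\hookrightarrow L^\infty(\mathbb R)$ the right-hand side is bounded by $C(\|u(t)\|_{H^2}^2+\|u(t)\|_{H^2}^4)$, which is integrable on $[0,T]$ by Step~1. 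Gronwall's inequality then gives $\sup_{[0,T]}(pu(t),u(t))<\infty$, i.e.\ $\langle x\rangle^{1/2}u\in L^\infty_T L^2_x$. The formal manipulation is justified by truncating the weight (e.g.\ $p_N(x)=\min\{\langle x\rangle,N\}$) and passing to the limit $N\to\infty$. Continuity $t\mapsto\langle x\rangle^{1/2}u(t)\in L^2_x$ is extracted from the integral equation, dominated convergence and the weighted bound, finishing the verification of \eqref{mdes11}.

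\textbf{Main obstacle.} The weighted step is mild at $r=1/2$, since every $p^{(j)}$ with $j\ge 1$ is bounded. The heart of the argument is the trilinear estimate of Step~1: the cubic nonlinearity $u^2\partial_x u$ (rather than the quadratic $u\partial_x u$ of $k=1$) forces a careful distribution of the derivative and requires all \emph{three} mixed norms \eqref{mdes12}--\eqref{mdes14}, together with a sharp interpolation on $\partial_x u$, in order to extract the positive power $T^\theta$ that closes the contraction.
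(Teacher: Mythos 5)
Your proposal follows essentially the same route as the paper: a contraction argument in $C([0,T];H^2)$ augmented with the three mixed norms \eqref{mdes12}--\eqref{mdes14}, with the trilinear term handled exactly as in the paper's estimate \eqref{nle1} (two factors in $L^{16/5}_x L^\infty_T$, the derivative factor interpolated against $\|\partial_x^4 u\|_{L^\infty_x L^2_T}$), followed by a weighted energy estimate with a truncated weight to recover $\langle x\rangle^{1/2}u\in C([0,T];L^2)$, which is precisely how the paper reduces \eqref{mdes11} to the argument of Theorem \ref{prin1} for $r\ge 1/2$. The only points left implicit — the maximal-function bounds on the Duhamel term (which is where the $L^4_xL^\infty_T$ norm is actually needed) and the approximation by smooth solutions that justifies the integration by parts in Step 2 — are carried out in the paper but do not change the approach.
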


\begin{Theorem}\label{tmkdv2}
The IVP \eqref{KdV} for the modified fifth order KdV equation ($k=2$) is GWP in $Z_{2,1/2}$.
\end{Theorem}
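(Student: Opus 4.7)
The plan is to bootstrap the local theory of Theorem~\ref{tmkdv1} using the conservation laws \eqref{cl1}, \eqref{mcl2} together with the weighted energy identity \eqref{p}. Since the existence time produced by Theorem~\ref{tmkdv1} depends only on $\|u_0\|_{H^2}$, it suffices to establish time-independent bounds on $\|u(t)\|_{H^2}$ and on $\|\langle x\rangle^{1/2}u(t)\|_{L^2}$; iterating the local result then yields the solution for all $t\ge0$, and the reversibility of \eqref{KdV} under $(x,t)\mapsto(-x,-t)$ handles negative times.

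The $H^2$ bound is immediate from the conservation laws. By \eqref{cl1}, $\|u(t)\|_{L^2}=\|u_0\|_{L^2}$, and since both summands of $I_2^2$ in \eqref{mcl2} are non-negative,
$$\|\partial_x^2 u(t)\|_{L^2}^2\le I_2^2(0)=\tfrac{1}{12}\|u_0\|_{L^4}^4+\|\partial_x^2 u_0\|_{L^2}^2\le C(\|u_0\|_{H^2}),$$
where the Sobolev embedding $H^1(\mathbb R)\hookrightarrow L^\infty(\mathbb R)$ controls the $L^4$ term. Hence $\sup_t\|u(t)\|_{H^2}\le C(\|u_0\|_{H^2})$, and the local solution extends as a global $H^2$ solution by iteration of Theorem~\ref{tmkdv1} on intervals of the same length.

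For the weighted component I would exploit the identity \eqref{p} with $k=2$, $r=1/2$, so that $p(x)=\langle x\rangle$. In this regime $p^{(1)}$ is uniformly bounded and $p^{(3)},p^{(5)}$ decay like $\langle x\rangle^{-3},\langle x\rangle^{-5}$, so the right-hand side of \eqref{p} is dominated by $C\bigl(\|u\|_{H^2}^2+\|u\|_{L^4}^4\bigr)$, which is globally bounded by the previous step. Integrating in $t$ yields the at-most-linear-in-time bound
$$\|\langle x\rangle^{1/2}u(t)\|_{L^2}^2\le\|\langle x\rangle^{1/2}u_0\|_{L^2}^2+Ct,$$
so $u(t)\in Z_{2,1/2}$ for every $t$. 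Continuous dependence on $u_0$ in $Z_{2,1/2}$ then follows by chaining the Lipschitz estimate of Theorem~\ref{tmkdv1} across the finitely many subintervals required to cover any compact time interval.

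The main technical obstacle is that \eqref{p} is purely formal at the regularity $u\in H^2$: it involves up to five integrations by parts against the unbounded weight $\langle x\rangle$. I would resolve this by a standard two-step approximation. First, replace $p$ by the truncation $p_N(x)=\langle x\rangle\chi(x/N)$, where $\chi\in C_c^\infty(\mathbb R)$ is identically $1$ near the origin, so that all boundary and growth issues disappear. Second, regularize the initial data in a smoother weighted space (obtained by running the same contraction scheme as in Theorem~\ref{tmkdv1} in $Z_{s,1/2}$ with $s>2$, which works \emph{mutatis mutandis}); the identity then holds rigorously for the regularized solutions. Passing $N\to\infty$ by monotone convergence, followed by the regularization limit justified through the continuous dependence of Theorem~\ref{tmkdv1}, recovers \eqref{p} in the generality we need and closes the argument.
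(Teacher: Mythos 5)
Your proposal is correct and follows essentially the same route as the paper: the uniform $H^2$ bound comes from the non-negativity of both summands of $I_2^2$ in \eqref{mcl2} together with \eqref{cl1}, and the persistence of the weight is obtained from the (truncated, regularized) weighted energy identity, which for $r=1/2$ needs no interpolation since $p^{(1)}$ is bounded. The only cosmetic difference is that you re-derive the weighted bound explicitly inside the globalization argument, whereas the paper delegates it to the local theory of Theorem \ref{tmkdv1} (whose proof invokes the same truncated-weight computation from the case $r\geq 1/2$ of Theorem \ref{prin1}).
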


\begin{Theorem} \label{tmkdv3}
For $T>0$ let $u\in C([0,T];Z_{2,1/2})$ be the solution of the modified fifth order KdV equation ($k=2$), obtained in Theorems \ref{tmkdv1} and \ref{tmkdv2}. Let us suppose that for $\alpha\in(0,1/8]$ there exist two different times $t_0,t_1\in [0,T]$, with $t_0<t_1$, such that $|x|^{1/2+\alpha} u(t_0)$ and $|x|^{1/2+\alpha} u(t_1)$ are in $L^2(\mathbb R)$. Then $u\in C([0,T];H^{2+4\alpha}(\mathbb R))$.
\end{Theorem}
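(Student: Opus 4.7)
The strategy is the Isaza--Linares--Ponce method from \cite{ILP}, adapted from the KdV case to the fifth-order dispersion. First, a reduction: it is enough to prove that $u(t_0)\in H^{2+4\alpha}(\mathbb R)$ at the single time $t_0$. Indeed, once this is known, the local well-posedness argument of Theorem \ref{tmkdv1} repeated with $s=2+4\alpha$ in place of $s=2$ (in the spirit of Remark \ref{R1}(b)) produces $\tilde u\in C([t_0,t_0+T^\ast];H^{2+4\alpha}(\mathbb R))$ for some $T^\ast>0$; by uniqueness in the $H^2$-class this $\tilde u$ coincides with $u$, and iterating the local result (whose lifespan is controlled by $\|u\|_{L^\infty_T H^2}$, finite by Theorem \ref{tmkdv2}) propagates the $H^{2+4\alpha}$-regularity over the entire interval $[0,T]$.

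Let $U(t)\phi:=\mathcal{F}^{-1}(e^{it\xi^5}\hat\phi)$ be the unitary group of the linearised fifth-order equation. Duhamel's formula gives
$$u(t_1)=U(t_1-t_0)u(t_0)-\int_{t_0}^{t_1}U(t_1-\tau)(u^2\partial_x u)(\tau)\,d\tau.$$
Applying the weight $|x|^{1/2+\alpha}$ and passing to frequency through the identity $\|\,|x|^{1/2+\alpha}U(t)\phi\|_{L^2}=\|D_\xi^{1/2+\alpha}(e^{it\xi^5}\hat\phi)\|_{L^2}$, a fractional Leibniz / Fonseca--Linares--Ponce pointwise decomposition (cf.\ \cite{FLP}) adapted to the phase $\xi^5$ yields
$$D_\xi^{1/2+\alpha}\bigl(e^{it\xi^5}\hat\phi\bigr)=e^{it\xi^5}D_\xi^{1/2+\alpha}\hat\phi\;+\;c_\alpha(5it\xi^4)^{1/2+\alpha}e^{it\xi^5}\hat\phi\;+\;\Delta_t(\phi),$$
whose three summands have $L^2_\xi$-norm comparable, respectively, to $\|\,|x|^{1/2+\alpha}\phi\|_{L^2}$, to $|t|^{1/2+\alpha}\|D_x^{2+4\alpha}\phi\|_{L^2}$, and to a remainder controlled by $\|\phi\|_{H^2}$ and $\|\,|x|^{1/2}\phi\|_{L^2}$. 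Expanding by this formula both the free term (with $\phi=u(t_0)$) and, inside the time integral, the Duhamel integrand, every resulting piece will be shown to be in $L^2$ by virtue of the two weighted hypotheses, the regularity $u\in C([0,T];Z_{2,1/2})$, and the auxiliary space-time norms \eqref{mdes12}--\eqref{mdes14}, \emph{except} the one proportional to $(t_1-t_0)^{1/2+\alpha}D_x^{2+4\alpha}u(t_0)$. Isolating it and using $t_1\neq t_0$ forces $D_x^{2+4\alpha}u(t_0)\in L^2(\mathbb R)$, which together with $u(t_0)\in H^2$ gives $u(t_0)\in H^{2+4\alpha}(\mathbb R)$ and closes the reduction.

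Two technical points carry the real weight of the argument. First, the pointwise decomposition above is established in \cite{FLP} for $\beta\in(0,1)$ and the KdV phase $\xi^3$; here one must carry it out for the phase $\xi^5$ and for $\beta=1/2+\alpha\in(1/2,5/8]$. The hypothesis $\alpha\le 1/8$ is precisely what keeps $\beta\le 5/8$ and guarantees that the commutator remainder $\Delta_t(\phi)$ can be absorbed by the norms we already own for $u$, namely $\|u(t)\|_{H^2}$ and $\|\,|x|^{1/2}u(t)\|_{L^2}$. Second, and the step I expect to be the main obstacle, the weighted Duhamel integral
$$\Bigl\|\,|x|^{1/2+\alpha}\!\int_{t_0}^{t_1}U(t_1-\tau)(u^2\partial_x u)(\tau)\,d\tau\Bigr\|_{L^2}$$
after expansion produces a summand of the form $\int_{t_0}^{t_1}U(t_1-\tau)[|x|^{1/2+\alpha}(u^2\partial_x u)](\tau)\,d\tau$, which demands a uniform bound on $\|\,|x|^{1/2+\alpha}u(\tau)\|_{L^2}$ for \emph{all} $\tau\in[t_0,t_1]$, not merely at the endpoints; it also produces the analogue of the regularity-gaining summand with $D_x^{2+4\alpha}(u^2\partial_x u)$ inside the integral, which in turn requires the smoothing norm \eqref{mdes12} together with the product and Leibniz rules. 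The first difficulty is handled by the standard truncation device: introduce $\omega_N(x)=\langle x\rangle^{1/2+\alpha}\chi(x/N)$ with $\chi\in C_c^\infty(\mathbb R)$, run the entire preceding argument with $\omega_N$ in place of $|x|^{1/2+\alpha}$ to obtain estimates uniform in $N$ via H\"older's inequality, the embedding $H^2\hookrightarrow L^\infty$, the norms \eqref{mdes12}--\eqref{mdes14}, and the interpolation Lemma cited in Section 1, and finally let $N\to\infty$.
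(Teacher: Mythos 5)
Your route through Duhamel's formula and a Fourier-side decomposition of $D_\xi^{1/2+\alpha}(e^{it\xi^5}\hat\phi)$ is genuinely different from the paper's, which never touches the integral equation: the paper runs Kato-type weighted energy identities with truncated odd weights $\phi_N\sim \operatorname{sgn}(x)(\langle x\rangle^{1+2\alpha}-1)$ and exploits the favorable sign of the local smoothing term $5\int(\partial_x^2 u_m)^2\phi_N^{(1)}$, so that the decay hypotheses at the two times enter only as boundary terms and yield $\int_{t_0}^{t_1}\int(\partial_x^2u)^2\langle x\rangle^{2\alpha}\,dx\,dt<\infty$; a second identity of the same type for $w=\partial_x^2u$ with weight $\sim\langle x\rangle^{2\alpha}$ gives $\langle x\rangle^{\alpha-1/2}\partial_x^2w(t)\in L^2$ a.e., and Lemma \ref{uno} with $\theta=2\alpha$ converts the pair (decay of $w$, weighted second derivative of $w$) into $w(t)\in H^{4\alpha}$. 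Your plan bypasses all of this, but it rests on a step that is asserted rather than proved and that carries the entire content of the theorem: the decomposition actually available (the paper's \eqref{FW}--\eqref{FW1}, i.e.\ the Fonseca--Linares--Ponce formula) bounds the remainder by $\|D_x^{4r}u_0\|_{L^2}$ with $4r=2+4\alpha$, which is precisely the quantity you are trying to show is finite, so invoking it is circular; extracting an exact leading term $c_\alpha(5it\xi^4)^{1/2+\alpha}e^{it\xi^5}\hat\phi$ together with an error genuinely controlled by $\|\phi\|_{H^2}+\||x|^{1/2}\phi\|_{L^2}$ alone is a substantial claim that the cited literature does not supply and whose proof is not sketched.

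The second gap is the one you flag yourself but do not close. The summand $\int_{t_0}^{t_1}U(t_1-\tau)\bigl[|x|^{1/2+\alpha}(u^2\partial_xu)\bigr](\tau)\,d\tau$ requires a bound on $\|\langle x\rangle^{1/2+\alpha}\partial_xu(\tau)\|_{L^2}$ at \emph{every} intermediate time, which is not among the hypotheses, and the proposed truncation does not repair this: with $\omega_N$ in place of $|x|^{1/2+\alpha}$ every term is finite for fixed $N$, but the bound on $\|\omega_N\partial_xu(\tau)\|_{L^2}$ obtainable from $u\in C([0,T];Z_{2,1/2})$ and Lemma \ref{uno} grows with $N$ (you control the weight $\langle x\rangle^{1/2}$ of $u$, not the weight $\langle x\rangle^{1/2+\alpha}$ of $\partial_xu$), so nothing survives the limit $N\to\infty$. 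The energy method avoids exactly this: after integration by parts the dangerous intermediate-time weighted quantity appears only inside the nonnegative smoothing term or multiplied by derivatives of the weight, which lower its growth; Duhamel's formula has no access to that cancellation. To salvage your approach you would need both the refined decomposition and an a priori persistence estimate for $\langle x\rangle^{1/2+\alpha}u$ on $[t_0,t_1]$ -- at which point you would essentially be rebuilding the paper's weighted energy argument.
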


This article is organized as follows: in section 2 we recall some linear estimates of the group associated to the linear part of the equation in the IVP \eqref{KdV}, a pointwise estimate for this group, related with fractional weights, and an interpolation inequality in weighted Sobolev spaces. In section 3 we study the IVP \eqref{KdV} with $k=1$ and prove Theorems \ref{prin1} and \ref{prin2}. In section 4 we consider the IVP \eqref{KdV} with $k=2$ and establish Theorems \ref{tmkdv1} and \ref{tmkdv2}.  In section 5 we give the proof of Theorem \ref{tmkdv3}.

Throughout the paper the letter $C$ will denote diverse constants, which may change from line to line, and whose dependence on certain parameters is clearly established in all cases.

\section{Preliminary Results}\label{pre}

In this section we recall some linear estimates for the group associated to the linear part of the equation in the IVP \eqref{KdV}, a pointwise estimate for ``fractional weights", and an interpolation inequality in weighted Sobolev spaces. On the other hand, we establish an standard estimate in weighted Sobolev spaces.

Let us consider the linear problem associated to the IVP \eqref{KdV}:
\begin{align}
\left. \begin{array}{rr}
\partial_{t}u+\partial_{x}^{5}u=0,\text{} & \quad x,t\in \mathbb R\\
u(0)=u_0 & 
\end{array}
\right\} ,\label{lKdV}
\end{align}

whose solution is given by the group $\{W(t) \}_{t\in\mathbb R}$, i.e.
$$u(x,t)=[W(t)u_0](x):=(S_t\ast u_0)(x),$$
where $S_t(x)$ is defined by the oscillatory integral
$$S_t(x)=C\int_{\mathbb R} e^{ix\xi} e^{-it\xi^5} d\xi.$$

In \cite{KPVOsc}, \cite{KPVWell1} and \cite{KPVWell}, Kenig, Ponce and Vega established the following estimates for the group $\{W(t)\}_{t\in\mathbb R}$:
\begin{enumerate}
\item[(i)] (Homogeneous smoothing effect) There exists a constant $C$ such that
\begin{align}
\|\partial_x^2 W(t) u_0 \|_{L^\infty_x L^2 _t}\leq &C\| u_0\|_{L^2}. \label{le1}
\end{align}

\item[(ii)] (Dual version of estimate \eqref{le1}) There exists a constant $C$ such that
\begin{align}
\|\partial^2_x \int_0^t W(t-t') f(\cdot,t') dt' \|_{L^\infty_T L^2_x}\leq &C \| f \|_{L^1_x L^2_T}.\label{le3}
\end{align}

\item[(iii)] (Inhomogeneous smoothing effect) There exists a constant $C$ such that
\begin{align}
\|\partial^4_x \int_0^t W(t-t') f(\cdot,t') dt' \|_{L^\infty_x L^2_t}\leq &C \| f \|_{L^1_x L^2_t}.\label{le2}
\end{align}

\item[(iv)] (Estimate of the maximal function) For any $\rho>\frac 34$ and $s>\frac 54$ there exists $C$ such that
\begin{align}
\| W(t)u_0\|_{L^2_x L^\infty_T}\leq &C(1+T)^\rho \|u_0 \|_{H^s}.\label{le4}
\end{align}

\item[(v)] There exists a constant $C$ such that, for $u_0\in H^{1/4}(\mathbb R)$,
\begin{align}
\| W(t)u_0\|_{L^4_x L^\infty_T}\leq &C\| D^{1/4} u_0\|_{L^2}, \label{le5} 
\end{align}
(see \cite{KR}).
\end{enumerate}
By interpolation it follows, from \eqref{le4} and \eqref{le5}, that for $\rho>\frac 34$ and $s>\frac 54$,
\begin{align}
\| W(t)u_0\|_{L^{16/5}_x L^\infty_T}\leq &C(1+T)^\rho \|u_0 \|_{H^s}.\label{le6m}
\end{align}

\begin{enumerate}
\item[(vi)] There exists a constant $C$ such that
\begin{align}
\| D_x^{3/4} W(t)u_0 \|_{L^4_t L^\infty_x}\leq & C\| u_0\|_{L^2}. \label{le6}
\end{align}
\end{enumerate}
Using \eqref{le1}, \eqref{le4} and \eqref{le5}, and proceeding as in the proofs of Theorem 2.1 in \cite{KPVWell} and Theorem 1.1 in \cite{KPVOsc}, it can be established the following theorem.
\begin{Theorem}\label{principal}
Let $s>\frac54$. Then for any $u_0\in H^s(\mathbb{R})$ there exist $T=T(\|u_0\|_{H^s})>0$ (with $T(\rho)\to\infty$ as $\rho\to 0$) and a unique solution $u$ of the IVP \eqref{KdV} with $k=1$, satisfying
\begin{align}
u\in C([0,T];&H^s(\mathbb{R}))\,,\label{con}\\
\| \partial_xu \|_{L^4_T L^\infty_x}&<\infty\,,\\
\|D_x^s \partial_x u \|_{L_x^{\infty}L_T^2}&<\infty\,,\quad\text{and}\label{g1d}\\
\|u\|_{L_x^2L_T^{\infty}}&<\infty\,.\label{H}
\end{align}
Moreover, for any $T'\in(0,T)$ there exists a neighborhood $V$ of $u_0$ in $H^s(\mathbb{R})$ such that the data-solution map $\tilde{u_0}\mapsto\tilde{u}$ from $V$ into the class defined by \eqref{con}-\eqref{H} with $T'$ instead of $T$ is Lipschitz. Besides, if $u_0\in H^{s'}$ with $s'>s$ then the above results hold with $s'$ instead of $s$ in the same time interval $[0,T]$ (regularity property).
\end{Theorem}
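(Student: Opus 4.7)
The plan is to apply the contraction mapping principle to the Duhamel formulation
$$u(t) = W(t)u_0 - \int_0^t W(t-t')\,(u\partial_x u)(t')\,dt',$$
in a complete metric space tailored to the four mixed norms appearing in the conclusion. Specifically, I would fix parameters $T>0$ and $a>0$ (to be chosen later) and consider
$$X_T^a := \bigl\{\, u \in C([0,T];H^s(\mathbb R)) \,:\, \Lambda_T(u) \leq a \,\bigr\},$$
where $\Lambda_T(u)$ is the sum $\|u\|_{L^\infty_T H^s_x} + \|\partial_x u\|_{L^4_T L^\infty_x} + \|D_x^s\partial_x u\|_{L^\infty_x L^2_T} + \|u\|_{L^2_x L^\infty_T}$. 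For the free term $W(\cdot)u_0$, each of these norms is bounded by $C(1+T)^\rho\|u_0\|_{H^s}$: the $H^s$ norm by unitarity of the group, the smoothing norm by \eqref{le1} applied with $D_x^{s-1}u_0$ in place of $u_0$, the $L^4_T L^\infty_x$ norm by \eqref{le6} combined with the factorization $\partial_x = D_x^{1/4}D_x^{3/4}$, and the maximal-function norm directly by \eqref{le4}, for which the hypothesis $s>5/4$ is essential.

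The core of the argument is to control $\Phi(u)(t) := \int_0^t W(t-t')(u\partial_x u)(t')\,dt'$ in the same four norms. The crucial piece is the estimate of $\|D_x^s\partial_x \Phi(u)\|_{L^\infty_x L^2_T}$: the inhomogeneous smoothing effect \eqref{le2}, used in combination with its dual \eqref{le3} and fractional interpolation, reduces matters to bounding $\|D_x^s(u\partial_x u)\|_{L^1_x L^2_T}$, which by the Kato-Ponce fractional Leibniz rule decomposes into products estimated via Hölder in $x$ and $t$ by combinations such as $T^{\delta}\|u\|_{L^2_x L^\infty_T}\|D_x^s\partial_x u\|_{L^\infty_x L^2_T}$ and $T^{\delta}\|D_x^s u\|_{L^\infty_T L^2_x}\|\partial_x u\|_{L^4_T L^\infty_x}$, for some $\delta>0$. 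The remaining three norms of $\Phi(u)$ are handled analogously: the $H^s$ norm by the energy method together with Kato-Ponce; the $L^4_T L^\infty_x$ norm of $\partial_x \Phi(u)$ via \eqref{le6} in Duhamel form combined with the smoothing gain; and the $L^2_x L^\infty_T$ norm via \eqref{le4} applied inside the time integral. The outcome is a difference bound
$$\Lambda_T(\Phi(u)-\Phi(v)) \leq C T^{\delta}\bigl(\Lambda_T(u)+\Lambda_T(v)\bigr)\Lambda_T(u-v).$$

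With this in hand, choosing $a := 2C\|u_0\|_{H^s}$ and $T=T(\|u_0\|_{H^s})$ small enough that $CT^\delta a \leq 1/2$ yields both the self-mapping and the contraction property on $X_T^a$, producing the unique fixed point $u$; since the smallness condition depends monotonically on $\|u_0\|_{H^s}$, one recovers $T(\rho)\to \infty$ as $\rho\to 0$. The Lipschitz continuous dependence on $[0,T']\subset[0,T)$ follows by applying the same difference estimate to two nearby data, and the persistence property at higher regularity $s'>s$ is obtained by rerunning the identical contraction in the $H^{s'}$-analog of $X_T^a$, using the already-obtained $H^s$-bound to preserve the lifespan $T$. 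The main obstacle is the top-order nonlinear estimate: the smoothing effect \eqref{le2} gains exactly four derivatives, so accommodating an extra (noninteger) $s$-derivatives on the product $u\partial_x u$ requires a careful application of the Kato-Ponce fractional Leibniz rule balanced against the four mixed norms of $\Lambda_T$, and the threshold $s>5/4$ enters through the maximal-function estimate \eqref{le4}, which is precisely what allows us to place $u$ itself in the $L^2_x L^\infty_T$ space used on the nonlinear side.
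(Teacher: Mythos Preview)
Your proposal is correct and follows the same route the paper indicates. Note that the paper does not actually write out a proof of this theorem: immediately before the statement it says ``Using \eqref{le1}, \eqref{le4} and \eqref{le5}, and proceeding as in the proofs of Theorem~2.1 in \cite{KPVWell} and Theorem~1.1 in \cite{KPVOsc}, it can be established the following theorem,'' and leaves it at that. What you have sketched is precisely that Kenig--Ponce--Vega contraction argument, and indeed the paper reuses exactly your four norms $\lambda_1^T,\dots,\lambda_4^T$ (with a harmless $(1+T)^{-\rho}$ weight on the maximal-function norm) in the proof of Theorem~\ref{prin1}, first case.

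One small clarification: for the top-order smoothing norm $\lambda_3^T$ of the Duhamel term, the paper's approach (following \cite{KPVWell1}) does not invoke the inhomogeneous estimate \eqref{le2}; instead it uses Minkowski in $t'$ and applies the homogeneous estimate \eqref{le1} to $W(t-t')(v\partial_x v)(t')$, reducing to $\|D_x^{s}(v\partial_x v)\|_{L^1_T L^2_x}$ (this is also what drives the $H^s$ norm $\lambda_1^T$). The paper even remarks after the theorem that only one of the two derivatives gained by \eqref{le1} is actually used. Your invocation of \eqref{le2}/\eqref{le3} would work as well and is in the same spirit, but the simpler Minkowski route is what the cited KPV argument does.
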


Let us observe the gain of two derivatives in $x$ in the linear estimate \eqref{le1}. However, the condition \eqref{g1d} only uses the gain of one derivative in $x$.

One of the main tools for establishing LWP of the IVP \eqref{KdV} with $k=1$ in weighted Sobolev spaces with low regularity is the following pointwise formula, proved by Fonseca, Linares and Ponce in \cite{FLP}:
\begin{enumerate}
\item[(vii)] For $r\in(0,1)$ and $u_0\in Z_{4r,r}$ we have for all $t\in\mathbb R$ and for almost every $x\in \mathbb R$:
\begin{align}
|x|^r [W(t)u_0](x)=W(t) (|x|^r u_0)(x)+W(t)\{\Phi_{t,r} (\widehat {u_0}) \}  \veee (x), \label{FW}
\end{align}
where,
\begin{align}
\| (\Phi_{t,r} (\widehat{u_0})(\xi)) \veee \|_{L^2}\leq C_r (1+|t|) (\| u_0\|_{L^2}+\|D_x^{4r} u_0\|_{L^2}). \label{FW1}
\end{align}
\end{enumerate} 
With respect to the weight $\langle x\rangle:=(1+x^2)^{1/2}$, for $N\in\mathbb N$, we will consider a truncated weight $w_N$ of $\langle x \rangle$, such that $w_N\in C^\infty(\mathbb R)$,
\begin{equation}
w_N(x)= \left\{ \begin{array}{ll}
\langle x\rangle \,\, &\mbox{if $|x|\leq N$}, \\ \\
2N \,\, &\mbox{if $|x|\geq 3N$}, \end{array} \right.
\label{peso}
\end{equation}
$w_N$ is non-decreasing in $|x|$ and for $j\in\mathbb{N}$ and $x\in\mathbb{R}$, the derivatives $w_N^{(j)}$ of order $j$ of $w_N$ satisfy
\begin{equation}
|w_N^{(j)}(x)|\leq\frac{c_j}{w_N^{j-1}(x)}\,,\label{cota}
\end{equation}
where the constant $c_j$ is independent from $N$.

Fonseca and Ponce in \cite{FP} deduced the following interpolation inequality, related to the weights $\langle x \rangle$ and $w_N$.
\begin{lemma}\label{uno} Let $a,b>0$ and $f\in Z_{a,b}\equiv H^a (\mathbb R)\cap L^2(\langle x\rangle^{2b }dx)$. Then for any $\theta\in(0,1)$
\begin{equation}\|J^{\theta a}(\langle x\rangle^{(1-\theta)b}f)\|_{L^2}\leq C\|\langle x\rangle^bf\|_{L^2}^{1-\theta}\|J^af\|_{L^2}^{\theta}, \label{inter}
\end{equation}
where $J^a f:=(1-\partial_x^2) ^{a/2} f$.

Moreover, the inequality \eqref{inter} is still valid with $w_N(x)$ as in \eqref{peso} instead of $\langle x\rangle$ with a constant $C$ independent of $N$.
\end{lemma}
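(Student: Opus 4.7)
The plan is to prove \eqref{inter} by complex interpolation, via the three-lines theorem applied to a suitable analytic family of operators. After the substitution $g:=\langle x\rangle^b f$ (equivalently $f=\langle x\rangle^{-b}g$), the claimed bound is equivalent to
$$\|J^{\theta a}(\langle x\rangle^{-\theta b} g)\|_{L^2} \leq C\|g\|_{L^2}^{1-\theta}\|J^a(\langle x\rangle^{-b} g)\|_{L^2}^\theta,$$
which suggests introducing the analytic family
$$T_z g := J^{za}\bigl(\langle x\rangle^{-zb} g\bigr),\qquad 0\leq \Re z\leq 1.$$
Note $T_0 g = g$, $T_1 g = J^a f$, while $T_\theta g = J^{\theta a}(\langle x\rangle^{(1-\theta)b}f)$ is the quantity to be estimated. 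I would fix a Schwartz pair $(g,\phi)$ and work with the scalar function $F(z) := \int (T_z g)\,\overline{\phi}\,dx$, which is holomorphic in the open strip and continuous on its closure, and verify an admissible bound $|F(z)|\leq C_1 e^{C_2|\Im z|}$ so that the Phragmén--Lindelöf/Stein form of the three-lines theorem applies.

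On the left edge $\Re z=0$ the proof is immediate: since $|\langle x\rangle^{-itb}|=1$ and $J^{ita}$ is an isometry of $L^2$ (Plancherel applied to the unimodular symbol $(1+|\xi|^2)^{ita/2}$), one gets $|F(it)|\leq \|g\|_{L^2}\|\phi\|_{L^2}$. The serious work lies on the right edge $\Re z=1$. Writing $\langle x\rangle^{-(1+it)b} = \langle x\rangle^{-b}\,m_t(x)$ with $m_t(x):=e^{-itb\log\langle x\rangle}$, and using $J^{(1+it)a} = J^{ita}J^a$, one has
$$\|T_{1+it} g\|_{L^2} = \bigl\|J^a\bigl(m_t\cdot\langle x\rangle^{-b} g\bigr)\bigr\|_{L^2}.$$
Since $|m_t|\equiv 1$ and Faà di Bruno applied to $e^{-itb\log\langle x\rangle}$ gives $|m_t^{(j)}(x)|\leq C_j(1+|t|)^j\langle x\rangle^{-j}$, a standard fractional Leibniz / Coifman--Meyer multiplier bound yields
$$\bigl\|J^a(m_t h)\bigr\|_{L^2}\leq C(1+|t|)^{N}\|J^a h\|_{L^2}$$
for some $N$ depending on $a$, and therefore $|F(1+it)|\leq C(1+|t|)^{N}\|J^a(\langle x\rangle^{-b}g)\|_{L^2}\|\phi\|_{L^2}$. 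I expect this multiplier estimate to be the main technical obstacle: one must justify commuting $J^a$ past the oscillating factor $m_t$ with controlled, polynomial growth in $|t|$, which needs some care for non-integer $a$ but follows from pseudodifferential calculus or a dyadic Littlewood--Paley decomposition.

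With the two boundary bounds in hand, Hadamard's three-lines theorem (in the version admitting polynomial growth in $|\Im z|$) gives
$$|F(\theta)|\leq C\,\|g\|_{L^2}^{1-\theta}\,\|J^a(\langle x\rangle^{-b}g)\|_{L^2}^{\theta}\,\|\phi\|_{L^2}.$$
Taking the supremum over $\|\phi\|_{L^2}\leq 1$, reversing the substitution, and invoking density of Schwartz functions in $Z_{a,b}$ (the left-hand side of \eqref{inter} is continuous in $f$ with respect to the norm of $Z_{a,b}$) yields \eqref{inter}.

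For the truncated weight $w_N$ the same argument works verbatim with $\langle x\rangle$ replaced by $w_N$ and $m_t$ replaced by $e^{-itb\log w_N(x)}$. The crucial point is that the bounds $|w_N^{(j)}(x)|\leq c_j/w_N^{j-1}(x)$ in \eqref{cota}, together with $w_N\geq 1$, propagate to $|(\log w_N)^{(j)}(x)|\leq C_j$ uniformly in $N$ (each derivative is a polynomial in ratios $w_N^{(i)}/w_N$, each of which is $\leq c_i$). Consequently the derivatives of the new $m_t$ admit the same $|t|$-growth bounds with constants independent of $N$, so the multiplier estimate, the boundary bounds, and the final constant in \eqref{inter} are all $N$-independent.
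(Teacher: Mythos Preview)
The paper does not prove this lemma; it simply quotes the result from Fonseca--Ponce \cite{FP}, so there is no ``paper's own proof'' to compare against beyond that reference. Your approach via Stein's complex interpolation (the analytic family $T_z g = J^{za}(\langle x\rangle^{-zb}g)$, boundary estimates on $\Re z=0,1$, and the three-lines theorem) is exactly the method used in \cite{FP}; your identification of the key technical point---that multiplication by the oscillatory factor $m_t(x)=e^{-itb\log\langle x\rangle}$ is bounded on $H^a$ with operator norm growing at most polynomially in $|t|$---and of the mechanism by which the bounds \eqref{cota} make the $w_N$ case go through with $N$-independent constants, are both correct.
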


Finally, in our arguments we will use the following standard estimate, concerning the weights $\langle x\rangle$ and $w_N$.
\begin{lemma}\label{dos}
Let $b>0$ and $n\in\mathbb{N}$. Suppose that $J^n(\langle x\rangle^{b}u_0)\in L^2(\mathbb{R})$. Then
\begin{equation}
\|\langle x\rangle^{b}\partial_x^nu_0\|_{L^2}\leq C(b,n)\|J^n(\langle x\rangle^{b}u_0)\|_{L^2}\,.\label{leibniz}
\end{equation}
Moreover, the inequality \eqref{leibniz} is still valid with $w_N(x)$ as in \eqref{peso} instead of $\langle x\rangle$ with a constant $C(b,n)$ independent of $N$.
\end{lemma}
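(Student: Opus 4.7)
The plan is to prove the lemma by induction on $n$, treating the smooth weight $\langle x\rangle$ and the truncated weight $w_N$ in parallel. The starting observation is the Plancherel equivalence $(1+\xi^{2})^{n}\sim 1+\xi^{2n}$ for $n\in\mathbb N$, which gives
\[
\|J^{n}f\|_{L^{2}}^{2}\sim\|f\|_{L^{2}}^{2}+\|\partial_{x}^{n}f\|_{L^{2}}^{2},
\]
and more generally $\|J^{k}f\|_{L^{2}}\leq C\|J^{n}f\|_{L^{2}}$ for every $0\leq k\leq n$. Applied to $f=\langle x\rangle^{b}u_{0}$, this already controls $\|\langle x\rangle^{b}u_{0}\|_{L^{2}}$ and $\|\partial_{x}^{n}(\langle x\rangle^{b}u_{0})\|_{L^{2}}$ by the right-hand side of \eqref{leibniz}.

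The core step is the Leibniz identity
\[
\langle x\rangle^{b}\partial_{x}^{n}u_{0}=\partial_{x}^{n}(\langle x\rangle^{b}u_{0})-\sum_{k=0}^{n-1}\binom{n}{k}\bigl(\partial_{x}^{n-k}\langle x\rangle^{b}\bigr)\partial_{x}^{k}u_{0}.
\]
A short induction on $j$ gives the pointwise bound $|\partial_{x}^{j}\langle x\rangle^{b}|\leq C_{b,j}\langle x\rangle^{b-j}$, and since $\langle x\rangle\geq 1$ we may further dominate $\langle x\rangle^{b-(n-k)}\leq\langle x\rangle^{b}$. Taking $L^{2}$ norms reduces the problem to estimating $\|\langle x\rangle^{b}\partial_{x}^{k}u_{0}\|_{L^{2}}$ for each $k\in\{0,1,\dots,n-1\}$, and here the inductive hypothesis (together with $\|J^{k}\|_{L^{2}}\leq\|J^{n}\|_{L^{2}}$) supplies the bound by $C\|J^{n}(\langle x\rangle^{b}u_{0})\|_{L^{2}}$. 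The base case $n=1$ is immediate from $|\partial_{x}\langle x\rangle^{b}|=b|x|\langle x\rangle^{b-2}\leq b\langle x\rangle^{b-1}$ and the Plancherel reduction. Summing these contributions closes the induction and yields \eqref{leibniz}.

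For the truncated weight $w_{N}$ the same scheme works once one establishes $|\partial_{x}^{j}(w_{N}^{b})|\leq C_{b,j}w_{N}^{b-j}$ with constants independent of $N$. This is proved by induction on $j$ from Faà di Bruno, using \eqref{cota} to bound each factor $|w_{N}^{(\ell)}|\leq c_{\ell}/w_{N}^{\ell-1}$, together with $w_{N}\geq 1$. The same inequality $w_{N}\geq 1$ preserves $w_{N}^{b-(n-k)}\leq w_{N}^{b}$, so the Leibniz-plus-induction argument goes through verbatim and the resulting constant $C(b,n)$ is independent of $N$.

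The only subtle point is maintaining $N$-uniformity in the truncated case; this is precisely what the tailored estimate \eqref{cota} is designed to deliver, so no serious analytic obstruction is expected. The rest of the argument is a bookkeeping induction on $n$ coupled with the elementary Plancherel equivalence for integer-order Bessel potentials.
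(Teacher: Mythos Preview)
Your proposal is correct and follows precisely the approach indicated in the paper, which states only that ``the proof follows from induction on $n$ and the Leibniz formula.'' You have filled in the details of that induction (Leibniz expansion, the pointwise bounds $|\partial_x^{j}\langle x\rangle^{b}|\leq C_{b,j}\langle x\rangle^{b-j}$ and their $w_N$ analogues via \eqref{cota}, together with the Plancherel equivalence for integer-order $J^{n}$) exactly as intended.
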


\begin{proof}
The proof follows from induction on $n$ and the Leibniz formula.
\end{proof}

\section{Well-Posedness of the IVP \eqref{KdV} with $k=1$\\(Proof of Theorems \ref{prin1} and \ref{prin2})}\label{k=1}

\subsection{Proof of Theorem \ref{prin1}} We consider two cases.

\textit{First case: $5/16<r<1/2$.}

Proceeding as in \cite{KPVWell1} and \cite{KPVWell}, for $u:\mathbb R\times [0,T]\to\mathbb R$ we define:
\begin{align}
\lambda_1^T(u):=& \max_{[0,T]} \| u(t)\|_{H^{4r}},\label{N1}\\
\lambda_2^T(u):=& \| \partial_x u\|_{L^4_TL^\infty_x}, \label{N2}\\
\lambda_3^T(u):=& \| D_x^{4r} \partial_x u \|_{L^\infty_x L^2_T},\label{N3}\\
\lambda_4^T(u):=& (1+T)^{-\rho} \| u \|_{L^2_xL^\infty_T}, \; \text{with $\rho$ a fixed number such that $\rho> \dfrac 34$}.\label{N4}
\end{align}
Additionally, we introduce
\begin{align}
\lambda_5^T(u):=\|  |x|^r u\|_{L^\infty_T L^2_x}.\label{N5}
\end{align}
Let us consider
\begin{align}
\Lambda^T(u):=\max_{1\leq j\leq 5} \lambda_j^T(u), \label{N}
\end{align}
and
\begin{align}
X_T:=\{u\in C([0,T];H^{4r}(\mathbb R)):\Lambda^T(u)<\infty \}.\label{XT}
\end{align}
Using the linear estimates \eqref{le6}, \eqref{le1} and \eqref{le4}, Kenig, Ponce and Vega in \cite{KPVWell}, showed that for $u_0\in H^{4r} (\mathbb R)$, $T>0$ and $1\leq i\leq 4$
\begin{align}
\lambda_i^T (W(t)u_0)\leq C \| u_0\|_{H^{4r}}.\label{Gi}
\end{align}
On the other hand, from \eqref{FW} and \eqref{FW1}, it follows that, for $t\in[0,T]$,
\begin{align}
\lambda_5^T (W(t)u_0)\leq \| |x|^r u_0\|_{L^2}+C_r (1+T)(\| u_0\|_{L^2}+\| D_x^{4r}u_0 \|_{L^2}).\label{G5}
\end{align}
In consequence, for $u_0\in Z_{4r,r}$, the estimates \eqref{Gi} and \eqref{G5} imply that
\begin{align}
\Lambda^T(W(t)u_0)\leq \| |x|^r u_0 \|_{L^2}+C(1+T) \| u_0\|_{H^{4r}}.\label{LG}
\end{align}
Let us denote by $u:=\Phi (v)\equiv \Phi_{u_0}(v)$ the solution of the linear inhomogeneous IVP
\begin{align}
\left. \begin{array}{r}
\partial_{t}u+\partial_{x}^{5}u+v\partial_{x}v=0\text{ \,}\\
u(0)=u_0
\end{array}
\right\} ,\label{KdV1}
\end{align}
where, $v\in X_T^a:=\{ w\in X_T: \Lambda^T(w)\leq a\}$, for $a>0$.

By Duhamel's formula:

\begin{align}
\Phi(v)(t)\equiv u(t)=W(t)u_0-\int_0^t W(t-t')(v\partial_x v)(t') dt'. \label{DF}
\end{align}
Taking into account that
$$\Lambda^T(u)\leq \Lambda^T(W(t)u_0)+\int_0^T \Lambda^T(W(t-t')(v\partial_x v(t'))) dt',$$
from \eqref{LG} it follows that
\begin{align}
\Lambda^T(u)\leq & \| |x|^r u_0\|_{L^2}+C(1+T) \| u_0\|_{H^{4r}}\notag\\
&+C(1+T)(\| v\partial_x v \|_{L^1_T L^2_x}+ \| D_x^{4r} (v\partial_x v)\|_{L^1_T L^2_x})+\| |x|^r v\partial_x v\|_{L^1_T L^2_x}. \label{LDF}
\end{align}
In \cite{KPVWell1} (see proof of Lemma 4.1) it was proved that
\begin{align}
\| v\partial_x v\|_{L^1_T L^2_x}+\| D^{4r}_x (v\partial_x v)\|_{L^1_T L^2_x}\leq & CT^{1/2} (1+T)^\rho \lambda_4^T (v)\lambda_3^T(v)+CT^{3/4}\lambda_2^T(v) \lambda_1^T(v)\notag\\
&+CT(\lambda_1^T(v))^2\notag\\
\leq & C(T^{1/2}(1+T)^\rho+T^{3/4}+T)(\Lambda^T(v))^2,\label{LDF1}
\end{align}
and let us observe that
\begin{align}
\| |x|^r v\partial_x v\|_{L^1_T L^2_x} \leq & CT^{3/4} \| |x|^r v\partial_x v\|_{L^4_T L^2_x}\leq CT^{3/4} \| |x|^r v\|_{L^\infty_T L^2_x} \| \partial_x v \|_{L^4_T L^\infty_x}\notag\\
\leq & CT^{3/4} \lambda_5^T (v)\lambda_2^T(v)\leq C T^{3/4}(\Lambda^T(v))^2. \label{LDF2}
\end{align}
From \eqref{LDF}-\eqref{LDF2} it follows that
$$\Lambda^T(u)\leq \| |x|^r u_0\|_{L^2} +C (1+T) \|u_0 \|_{H^{4r}}+C(1+T)(T^{1/2}(1+T)^\rho+T^{3/4}+T)(\Lambda^T(v))^2.$$
Taking $a:=2(\| |x|^r u_0\|_{L^2} +C (1+T) \|u_0 \|_{H^{4r}})$ and $T$ sufficiently small in order to have
$$C(1+T)(T^{1/2}(1+T)^\rho+T^{3/4}+T)a<\dfrac 12,$$
it can be seen that $\Phi: X_T^a \to X_T^a$.
Reasoning as in \cite{KPVWell} (proof of Theorem 2.1), for $T>0$ small enough, $\Phi: X_T^a \to X_T^a$ is a contraction. In consequence, there exists a unique $u\in X_T^a$ such that $\Phi(u)=u$. In other words, for $t\in[0,T]$:
$$u(t)=W(t)u_0-\int_0^t W(t-t')(u\partial_x u)(t') dt'.$$
To conclude the proof of this case we reason in the same manner as it was done at the end of the proof of Theorem 2.1 in \cite{KPVWell}.

\textit{Second case: $r\geq1/2$.}

By Theorem \ref{principal} there exist $T=T(\|u_0 \|_{H^{4r}})$ and a unique $u$ in the class defined by the conditions \eqref{con}-\eqref{H} with $s=4r$, which is a solution of the IVP \eqref{KdV} with $k=1$. Let $\{u_{0m}\}_{m\in\mathbb N}$ be a sequence in $C_0^\infty(\mathbb R)$ such that $u_{0m}\to u_0$ in $H^{4r}(\mathbb R)$ and let $u_m\in C([0,T];H^\infty(\mathbb R))$ be a solution of the equation in \eqref{KdV} corresponding to the initial data $u_{0m}$. (Without loss of generality we can suppose that $u_m$ is defined in the same interval $[0,T]$ (see regularity property in Theorem \ref{principal})). By Theorem \ref{principal} $u_m\to u$ in $C([0,T];H^{4r}(\mathbb R))$.\\
We multiply the equation
\begin{equation}
\partial_t u_m +\partial_x^5 u_m +u_m\partial_x u_m=0\label{ecnm}
\end{equation}
by $u_m w_N^{2r}$, where $w_N$ is the truncated weight defined in \eqref{peso}, and for a fixed $t\in[0,T]$, we integrate in $\mathbb R$ with respect to $x$ and use integration by parts to obtain
\begin{align}
\dfrac d{dt} (u_m(t),u_m(t)w_N^{2r})=&5(\partial_x^2u_m(t),\partial_x^2u_m(t)(w_N^{2r})^{(1)})-5(\partial_x u_m(t),\partial_x u_m(t)(w_N^{2r})^{(3)})\notag\\
&+(u_m(t),u_m(t)(w_N^{2r})^{(5)})+\dfrac 23 (1,u_m(t)^3(w_N^{2r})^{(1)}),\label{derum}
\end{align}
where $(\cdot,\cdot)$ denotes the inner product in $L^2(\mathbb R)$.

Integrating last equation with respect to the time variable in the interval $[0,t]$, we have
\begin{align}
(u_m(t),u_m(t)w_N^{2r})=&(u_{0m},u_{0m}w_N^{2r})+5\int_0^t (\partial_x^2 u_m(t'),\partial_x^2 u_m(t')(w_N^{2r})^{(1)})dt'\notag\\
&-5\int_0^t (\partial_xu_m(t'),\partial_x u_m (t')(w_N^{2r})^{(3)})dt'+\int_0^t (u_m(t'),u_m(t')(w_N^{2r})^{(5)})dt'\notag\\
&+\dfrac 23 \int_0^t (1,u_m(t')^3(w_N^{2r})^{(1)})dt'. \label{intum}
\end{align}
Since $u_m\to u$ in $C([0,T]; H^{4r}(\mathbb R))$, with $4r\geq 2$, and the weight $w_N^{2r}$ and its derivatives are bounded functions, it follows from equality \eqref{intum}, after passing to the limit when $m\to\infty$, that
\begin{align}
(u(t),u(t)w_N^{2r})=&(u_{0},u_{0}w_N^{2r})+5\int_0^t (\partial_x^2 u(t'),\partial_x^2 u(t')(w_N^{2r})^{(1)})dt'\notag\\
&-5\int_0^t (\partial_xu(t'),\partial_x u (t')(w_N^{2r})^{(3)})dt'+\int_0^t (u(t'),u(t')(w_N^{2r})^{(5)})dt'\notag\\
&+\dfrac 23 \int_0^t (1,u(t')^3(w_N^{2r})^{(1)}) dt'\notag\\
\equiv&I+II+III+IV+V.\label{intu}
\end{align}
Let us estimate the terms on the  side of \eqref{intu}. First of all
\begin{equation}
I\leq \| u_0 \|_{L^2(\langle x \rangle^{2r}dx)}^2.\label{cotI}
\end{equation}
With respect to the term $II$, using Lemmas \ref{dos} and \ref{uno}, we have
\begin{align}
|II| &\leq 10r \int_0^t (\partial_x^2 u(t'),\partial_x^2 u(t') w_N^{2r-1} |(w_N)^{(1)}|)dt'\notag\\
&\leq C \int_0^t (\partial_x^2 u(t'),\partial_x^2 u(t') w_N^{2r-1})dt'
\leq C \int_0^t \|J^2( w_N ^{r-\frac 12} u(t')) \|_{L^2}^2 dt'\notag\\
&\leq C \int_0^t  \|J^{4r} u(t') \|_{L^2}^{1/r} \| w_N^r u(t') \|_{L^2}^{2-1/r} dt'
\leq C \int_0^t \|w_N^r u(t')\|_{L^2}^{2-1/r}dt'\notag\\
&\leq C\int_0^t (1+\|w_N^r u(t') \|_{L^2}^2) dt'
\leq Ct +C\int_0^t (u(t'),u(t')w_N^{2r})dt'. \label{cotII}
\end{align}
Using inequality \eqref{cota} for the derivatives of $w_N$ it can be seen that
\begin{equation}
|(w_N^{2r})^{(3)}|\leq Cw_N^{2r-3}\quad\text{and}\quad |(w_N^{2r})^{(5)}|\leq Cw_N^{2r-5}\,.
\end{equation}
In this manner we can bound the term III as follows:
\begin{equation}
|III|\leq C\int_0^t (\partial_xu(t'),\partial_x u (t')w_N^{2r-3})dt'\,.
\end{equation}
If $2r-3\leq 0$, since $u\in C([0,T];H^{4r}(\mathbb R))$ with $4r\geq 2$, it is clear that
\begin{equation}
|III|\leq Ct\,.\label{cotaIII}
\end{equation}
If $2r-3>0$, we apply Lemmas \ref{dos} and \ref{uno} to conclude that
\begin{align}
|III|&\leq C \int_0^t \|J( w_N^{r-\frac 32} u(t')) \|_{L^2}^2 dt'\notag\\
&\leq C \int_0^t  \|J^{4r} u(t') \|_{L^2}^{\frac1{2r}} \| w_N^\frac{r(r-3/2)}{(r-1/4)} u(t') \|_{L^2}^{\frac{4r-1}{2r}} dt'\notag\\
&\leq C \int_0^t \|w_N^\frac{r(r-3/2)}{(r-1/4)} u(t')\|_{L^2}^{2-\frac1{2r}}dt'
\leq C \int_0^t \|w_N^{r} u(t')\|_{L^2}^{2-\frac1{2r}}dt'\notag\\
&\leq C\int_0^t (1+\|w_N^r u(t') \|_{L^2}^2) dt'
\leq Ct +C\int_0^t (u(t'),u(t')w_N^{2r})dt'. \label{cotIII}
\end{align} 
In any case the estimate \eqref{cotIII} holds. In a similar manner it can be shown the following estimate for the term IV:
\begin{equation}
|IV|\leq C\int_0^t (u(t'),u(t')w_N^{2r})dt'. \label{cotIV}
\end{equation}
With respect to the term V we have:
\begin{align}
|V|&\leq C\int_0^t\|u(t')\|_{L^{\infty}}(u(t'),u(t')w_N^{2r-1})dt'
\leq C\int_0^t\|u(t')\|_{H^{4r}}(u(t'),u(t')w_N^{2r})dt'\notag\\
&\leq C\int_0^t(u(t'),u(t')w_N^{2r})dt'\,.\label{cotV}
\end{align}
From equality \eqref{intu} and the estimates \eqref{cotI}-\eqref{cotV} it follows that, for $t\in[0,T]$,
\begin{equation}
(u(t),u(t)w_N^{2r})\leq \|u_0\|_{L^2(\langle x\rangle^{2r}dx)}^2+Ct+C\int_0^t(u(t'),u(t')w_N^{2r})dt'\,.\notag
\end{equation}
Gronwall's inequality enables us to conclude that, for $t\in[0,T]$,
\begin{equation}
(u(t),u(t)w_N^{2r})\leq \|u_0\|_{L^2(\langle x\rangle^{2r}dx)}^2+Ct+C\int_0^t(\|u_0\|_{L^2(\langle x\rangle^{2r}dx)}^2+Ct')e^{C(t-t')}dt'\,.\label{Gron}
\end{equation}
Passing to the limit in \eqref{Gron} when $N\to\infty$ we obtain, for $t\in[0,T]$,
\begin{equation}
\|u(t)\|_{L^2(\langle x\rangle^{2r}dx)}^2\leq \|u_0\|_{L^2(\langle x\rangle^{2r}dx)}^2+Ct+C\int_0^t(\|u_0\|_{L^2(\langle x\rangle^{2r}dx)}^2+Ct')e^{C(t-t')}dt'\leq C(T), \label{Gron1}
\end{equation}
which implies that $u\in L^{\infty}([0,T];L^2(\langle x\rangle^{2r}dx))$.

Now let us see that $u\in C([0,T];L^2(\langle x\rangle^{2r}dx))$. For that we follow an argument contained in \cite{CP} and \cite{IO}. From \eqref{Gron1} it is clear that there is a positive constant $M$ such that, for all $t\in[0,T]$,
\begin{equation}
\|u(t)\|_{L_w^2}^2\leq \|u_0\|_{L_w^2}^2+Mt, \label{M}
\end{equation}
where the notation $L_w^2:=L^2(\langle x\rangle^{2r}dx)$ was used. 

Taking into account that $u\in C([0,T];L^2)$ and using \eqref{M}, it can be seen that, for $\phi\in L_w^2$, the function $t\mapsto(\phi,u(t))_{L_w^2}$ is continuous from $[0,T]$ to $\mathbb C$. From this fact and \eqref{M} it follows that
\begin{align}
\|u(t)-u(0)\|_{L_w^2}^2&=\|u(t)\|_{L_w^2}^2+\|u(0)\|_{L_w^2}^2-2\;\text{Re}\;(u(0),u(t))_{L_w^2}\notag\\
&\leq \|u(0)\|_{L_w^2}^2+Mt+\|u(0)\|_{L_w^2}^2-2\;\text{Re}\;(u(0),u(t))_{L_w^2}\to 0\quad\text{when}\;t\to 0^+\,,\notag
\end{align}
which proves that $u:[0,T]\longrightarrow L^2(\langle x\rangle^{2r}dx)$ is continuous at $t=0$.

The continuity of $u$  at a point $t_0\in(0,T]$ is a consequence from the continuity  of $u$ at $t=0$ and from the fact that the functions $v_1(x,t):=u(x,t_0+t)$ and $v_2(x,t):=u(-x,t_0-t)$ are also solutions of the fifth order KdV equation. In this manner, we had proved that if $u_0\in Z_{4r,r}$ $(r\geq\frac12)$ there exist $T=T(\|u_0 \|_{H^{4r}})>0$ and a unique $u\in C([0,T]; Z_{4r,r})$, solution of the IVP \eqref{KdV}, with $k=1$, belonging to the class defined by the conditions \eqref{con}-\eqref{H} with $s=4r$. 

Finally, let us prove that if $\widetilde u_m\in C([0,T]; Z_{4r,r})$ is the solution of the fifth order KdV equation, corresponding to the initial data $\widetilde u_{m0}$, where $\widetilde u_{m0}\to u_0$ in $Z_{4r,r}$ when $m\to\infty$, then $\widetilde u_m\to u$ in $C([0,T]; Z_{4r,r})$. By Theorem \ref{principal} we have that $\widetilde u_m\to u$ in $C([0,T];H^{4r})$. In consequence we only must prove that $\widetilde u_m\to u$ in $C([0,T];L^2(\langle x\rangle^{2r}dx))$. Let $v_m:=\widetilde u_m -u$ and $v_{m0}:=\widetilde u_{m0}-u_0$. Proceeding in a similar manner as it was done when we established that $u\in L^{\infty}([0,T];L^2(\langle x\rangle^{2r}dx))$ and taking into account that $v_m\to 0$ in $C([0,T];H^{4r})$ it can be seen that, for $t\in[0,T]$,
\begin{equation}
\|v_m(t)\|_{L^2(w_N^{2r}dx))}^2\leq \|v_{m0}\|_{L^2(\langle x\rangle^{2r}dx)}^2+C_mt+C\int_0^t\|v_m(t')\|_{L^2(w_N^{2r}dx))}^2dt'\,,\notag
\end{equation}
where $\underset{m\to\infty}\lim C_m=0$. Hence, by Gronwall's inequality, we have for $t\in[0,T]$ and $N\in\mathbb N$ that
\begin{equation}
\|v_m(t)\|_{L^2(w_N^{2r}dx))}^2\leq (\|v_{m0}\|_{L^2(\langle x\rangle^{2r}dx)}^2+C_mT)e^{CT}\,.\notag
\end{equation}
From this inequality it follows, after passing to the limit when $N\to\infty$, that
\[v_m\to 0\quad\text{in}\;C([0,T];L^2(\langle x\rangle^{2r}dx))\,.\]
The proof of Theorem \ref{prin1} is complete. \qed

\subsection{Proof of Theorem \ref{prin2}} Taking into account Remarks \ref{R1}(a) and \ref{R1}(b) it is sufficient to show that the IVP \eqref{KdV} for the fifth order KdV equation is globally well-posed in $H^2(\mathbb R)$.

To see this, first of all, we prove that if $u\in C([0,T];H^2(\mathbb R))$ is a solution of the IVP \eqref{KdV} then, for all $t\in[0,T]$, 
\begin{equation}
\| u(t) \|_{H^2}^2\leq K\equiv K(\|u_0 \|_{H^2}),\label{EstAprio}
\end{equation}
where $K$ only depends on $\| u_0\|_{H^2(\mathbb R)}$.

Let us observe that
\begin{align}
\int_\mathbb R (\partial_x u)^2(t) dx\leq  \dfrac 12  \left[ \int_\mathbb R (\partial_x^2 u)^2(t) dx+ \int_\mathbb R u^2(t)dx\right].\label{des1}
\end{align}

Using the definition of the $H^2$-norm, inequality \eqref{des1} and the conservation laws \eqref{cl1} and \eqref{cl2} it follows that
\begin{align}
\| u(t) \|^2_{H^2}&=\int_\mathbb R u^2(t) dx+ \int_\mathbb R (\partial_x u)^2(t) dx+\int_\mathbb R (\partial^2_x u)^2 (t) dx\notag\\
&\leq \dfrac 32 \int_\mathbb R u^2(t) dx+\dfrac 32 \int_\mathbb R (\partial^2_x u)^2 (t) dx\notag\\
&=\dfrac 32 I_1(t)+3 I_2^1(t)-\dfrac 12 \int_\mathbb R u^3(t) dx\notag\\
&=	\dfrac 32 \| u_0 \|_{L^2}^2+3\left[\dfrac 12 \|\partial_x^2 u_0 \|^2_{L^2}+\dfrac 16 \int_\mathbb R u_0^3 dx \right]-\dfrac 12 \int_\mathbb R u^3(t) dx.\label{des2}
\end{align}
Now, from the Sobolev lemma, we have that
\begin{align}
\int_\mathbb R u_0^3 dx\leq \|u_0 \|_{L^\infty} \int_\mathbb R u_0^2 dx\leq C \|u_0 \|_{H^2}^3\label{des3}.
\end{align}
On the other hand, the Sobolev lemma, the conservation law \eqref{cl1} and Young's inequality imply that
\begin{align}
\left|\int_\mathbb R u^3(t) dx\right|\leq &\|u(t) \|_{L^\infty} \|u(t) \|^2_{L^2}\leq C\|u(t) \|_{H^1} \|u(t) \|_{L^2}^2=C\|u(t) \|_{H^1}\|u_0 \|^2_{L^2}\notag\\
\leq & \dfrac 12 \| u(t)\|^2_{H^1}+\dfrac {C^2}2 \|u_0 \|^4_{L^2}\leq  \dfrac 12 \| u(t)\|^2_{H^1}+\dfrac {C^2}2 \|u_0 \|^4_{H^2}.\label{des4}
\end{align}
Therefore, from \eqref{des2}-\eqref{des4}, we have that
$$\| u(t) \|^2_{H^2}\leq \dfrac 32 \| u_0\|^2_{H^2}+C \|u_0 \|^3_{H^2}+ \dfrac{C^2}4 \| u_0\|^4_{H^2} +\dfrac 14 \|u(t) \|^2_{H^2},$$
and from the above inequality
\begin{equation}
\| u(t) \|^2_{H^2}\leq C (\| u_0\|^2_{H^2}+\|u_0 \|^3_{H^2}+\|u_0 \|^4_{H^2})\equiv K,\label{des5}
\end{equation}
which proves \eqref{EstAprio}.

Now we show how to extend the local solution $u$ to any time interval. From the proof of Theorem \ref{principal} it can be seen that the size of the time interval of the solution $u\in C([0,T];H^2(\mathbb R))$ of the IVP \eqref{KdV} is such that
$$T\geq \min\left\{ 1, \dfrac 1{C\|u_0 \|^2_{H^2}}\right\}.$$
Reasoning as in the proof of Theorem \ref{principal} we obtain a solution $u\in C([T,T+t_0];H^2(\mathbb R))$ of the IVP
\begin{align*}
\left. \begin{array}{rr}
\partial_{t}v+\partial_{x}^{5}v+v\partial_{x}v=0,\text{} & \quad x,t\in \mathbb R\\
v(T)=u(T) &
\end{array}
\right\} ,
\end{align*}
such that
$$t_0\geq \min\left\{ 1, \dfrac 1{C\|u(T)\|^2_{H^2}}\right\}.$$
In this manner we obtain a solution $u\in C([0,T+t_0];H^2(\mathbb R))$ of the IVP \eqref{KdV}.
By the a priori estimate \eqref{EstAprio} we have that
$$\dfrac 1{\|u(t) \|_{H^2}^2}\geq \dfrac 1{K},$$
for $t\in[0,T+t_0]$, and therefore
$$t_0\geq \min\left\{1,\dfrac 1{CK}  \right\}.$$
We repeat this argument $n+1$ times to obtain a solution $u\in C([0,T+t_0+\cdots+t_n];H^2(\mathbb R))$ with
$$t_j\geq \min\left\{ 1, \dfrac 1{CK}\right\}, \; j=0,\dots, n.$$
Since
$$\sum_{j=0}^n t_j\to\infty$$
as $n\to \infty$ then we can extend the solution to any time interval.\qed

\section{Well-Posedness of the IVP \eqref{KdV} with $k=2$\\(Proof of Theorems \ref{tmkdv1} and \ref{tmkdv2})}\label{k=2}

\subsection{Proof of Theorem \ref{tmkdv1}} For $T>0$, let us define the space
\begin{align}
Y_T:=&\{ u\in C([0,T];H^2(\mathbb R)): \|\partial_x^4 u \|_{L^\infty_x L^2_T}<\infty,\, \| u\|_{L^{16/5}_x L^\infty_T}<\infty,\, \| u\|_{L^4_x L^\infty_T}<\infty  \}, \label{mY}
\end{align}
and, for $u\in Y_T$, let us consider the norms:
\begin{align}
\lambda_1^T(u):=& \max_{[0,T]} \| u(t)\|_{H^2},\label{mN1}\\
\lambda_2^T(u):=& \| \partial^4 _x u\|_{L^\infty_x L^2_T}, \label{mN2}\\
\lambda_3^T(u):=& \| u \|_{L^{16/5}_x L^\infty_T},\label{mN3}\\
\lambda_4^T(u):=& \| u \|_{L^4_xL^\infty_T}, \label{mN4}\\
\Lambda^T(u):=&\max_{1\leq i\leq 4} \lambda^T_i (u). \label{mN}
\end{align}
For $a>0$, let $Y_T^a$ be the closed ball in $Y_T$ defined by
\begin{align}
Y_T^a:= \{ u\in Y_T : \Lambda^T(u)\leq a\}. \label{mYa}
\end{align}

We will prove that there exist $T>0$ and $a>0$ such that the operator
\begin{align*}
\Psi: Y_T^a& \to Y_T ^a\\
v& \mapsto \Psi(v)=W(t)u_0-\int_0^t W(t-t') (v^2 \partial_x v)(t') dt'
\end{align*}
is a contraction.

Besides the linear estimates in section \ref{pre}, we will need some nonlinear estimates in order to prove that $\Psi$ is a contraction.

First of all we establish these nonlinear estimates. Let $u\in Y_T$:
\begin{enumerate}
\item[(i)] Using interpolation we have
\begin{align}
\| u^2\partial_x u\|_{L^1_xL^2_T}\leq & \|u^2 \|_{L^{8/5}_x L^\infty_T} \| \partial_x u\|_{L^{8/3}_x L^2_T}\leq  \| u\|^2_{L^{16/5}_x L^\infty_T} \| u\|^{3/4}_{L^2_x L^2_T} \| \partial_x^4 u\|^{1/4}_{L^\infty_x L^2_T}\notag \\
\leq & C T^{3/4} \| u\|^2_{L^{16/5}_x L^\infty _T} \| u \|^{3/4}_{L^\infty_T L^2_x} \| \partial_x^4 u\|^{1/4}_{L^\infty_x L^2_T}. \label{nle1}
\end{align}

\item[(ii)] By \eqref{le2} and \eqref{nle1} it follows that
\begin{align}
\| \partial^4_x \int_0^t W(t-t') (u^2\partial_x u)(t')dt' \|_{L^\infty_x L^2_T}\leq & C\| u^2 \partial_x u\|_{L^1_x L^2_T}\leq C T^{3/4} \| u\|^2_{L^{16/5}_x L^\infty_T} \| u\|^{3/4}_{L^\infty_T L^2_x} \| \partial_x^4 u\|^{1/4}_{L^\infty_x L^2_T}. \label{nle2}
\end{align}

\item[(iii)] By \eqref{le3} and \eqref{nle1} it follows that
\begin{align}
\| \partial^2_x \int_0^t W(t-t') (u^2\partial_x u)(t')dt' \|_{L^\infty_T L^2_x}\leq C \| u^2\partial_x u\|_{L^1_x L^2_T}\leq C T^{3/4} \| u\|^2_{L^{16/5}_x L^\infty_T} \| u\|^{3/4}_{L^\infty_T L^2_x} \| \partial_x^4 u\|^{1/4}_{L^\infty_x L^2_T}. \label{nle3}
\end{align}
\end{enumerate}
Now we prove that there exist $T>0$ and $a>0$ such that $\Psi(Y_T^a)\subset Y_T^a$.

Let $v\in Y_T^a$. Then by \eqref{nle3}
\begin{align}
\notag &\lambda_1^T(\Psi(v))\\
\notag &\leq  \lambda_1^T(W(t)u_0)+\lambda_1^T(\int_0^t W(t-t')(v^2\partial_x v)(t') dt')\\
\notag &\leq \|u_0 \|_{H^2} +C \left( \sup_{[0,T]} \|\int_0^t W(t-t')(v^2\partial_x v)(t') dt' \|_{L^2}+\sup_{[0,T]} \|\partial_x^2 \int_0^t W(t-t')(v^2\partial_x v)(t')dt' \|_{L^2} \right)\\
\notag &\leq \| u_0 \|_{H^2}+CT\sup_{[0,T]} \| v(t) \|^3_{H^2}+CT^{3/4}\|v \|^2_{L_x^{16/5}L_T^\infty} \|v \|^{3/4}_{L^\infty_T L^2_x} \| \partial_x^4 v\|^{1/4}_{L^\infty_x L^2_T}\\
&\leq \|u_0 \|_{H^2}+CT^{3/4}(T^{1/4}+1)(\Lambda^T(v))^3.\label{mdes1}
\end{align}
From \eqref{le1} and \eqref{nle2} it follows that
\begin{align}
\notag \lambda_2^T(\Psi(v)) & \leq \| \partial_x^4 W(t)u_0 \|_{L^\infty_x L^2_T}+\|\partial_x^4 \int_0^t W(t-t') (v^2\partial_x v)(t') dt'\|_{L^\infty_x L^2_T}\\
\notag &\leq C \| \partial_x^2 u_0 \|_{L^2}+CT^{3/4} \| v\|^2_{L^{16/5}_x L^\infty_T} \| v\|^{3/4}_{L^\infty_T L^2_x}\|\partial_x^4 v \|^{1/4}_{L^\infty_x L^2_T}\\
\leq &C \|u_0 \|_{H^2}+CT^{3/4} (\Lambda^T(v))^3.\label{mdes2} 
\end{align}
By using \eqref{le6m}, the Leibniz rule and interpolation, we obtain
\begin{align}
\notag \lambda_3^T(\Psi(v))  \leq &\| W(t)u_0\|_{L^{16/5}_x L^\infty_T}+\|\int_0^t W(t-t')(v^2 \partial_x v)dt' \|_{L^{16/5}_x L^\infty_T}\\
\notag \leq &C(1+T)^\rho \|u_0 \|_{H^2}+C(1+T)^{\rho}\int_0^T \| v^2\partial_x v(t')\|_{H^2} dt'\\
\notag \leq &C(1+T)^\rho \|u_0 \|_{H^2}+C(1+T)^{\rho}\int_0^T \| v^2\partial_x v(t')\|_{L^2} dt'\\
\notag &+C(1+T)^{\rho}\int_0^T \| \partial_x^2 (v^2\partial_x v) (t')\|_{L^2} dt'\\
\notag \leq & C(1+T)^\rho \| u_0\|_{H^2}+C(1+T)^\rho T(\Lambda^T(v))^3+C(1+T)^\rho \left( \int_0^T \| (\partial_x v)^3(t') \|_{L^2} dt' \right.\\
\notag & \left. + \int_0^T \| (v \partial_x v\partial_x^2 v)(t')\|_{L^2} dt' +\int_0^T \| (v^2\partial_x^3 v)(t')\|_{L^2} dt' \right)\\
\notag \leq & C(1+T)^\rho \| u_0\|_{H^2}+C(1+T)^\rho T(\Lambda^T (v))^3+C(1+T)^\rho T^{1/2} \| v^2 \partial_x^3 v \|_{L^2_T L^2_x}\\
\notag \leq & C (1+T)^\rho \| u_0\|_{H^2}+C(1+T)^\rho T(\Lambda^T (v))^3+C(1+T)^\rho T^{1/2} \|v^2 \|_{L_x^{16/7}L^8_T} \| \partial_x^3 v\|_{L_x^{16}L_T^{8/3}}\\
\notag \leq & C  (1+T)^\rho \| u_0\|_{H^2}+C(1+T)^\rho T(\Lambda^T (v))^3\\
\notag &+C(1+T)^\rho T^{1/2} \|v \|_{L^{16/5}_x L^\infty_T} \| v\|_{L^8_x L^8_T} \| v\|^{1/4}_{L^4_x L^\infty_T} \| \partial_x^4 v\|^{3/4}_{L^\infty_x L^2_T}\\
\notag & \leq C  (1+T)^\rho \| u_0\|_{H^2}+C(1+T)^\rho T(\Lambda^T (v))^3\\
\notag &+C(1+T)^\rho T^{1/2} \lambda_3^T (v) T^{1/8} \lambda_1^T (v) \lambda_4^T (v)^{1/4}\lambda_2^T (v)^{3/4}\\
\leq & C  (1+T)^\rho \| u_0\|_{H^2}+C(1+T)^\rho T(\Lambda^T (v))^3+C(1+T)^\rho T^{5/8} (\Lambda^T(v))^3.\label{mdes3}
\end{align}
Applying \eqref{le5} we have that
\begin{align}
\notag \lambda_4^T(\Psi(v))  \leq & \|W(t)u_0 \|_{L^4_x L^\infty_T}+\int_0^T \| W(t-t')(v^2 \partial_x v)(t') \|_{L^4_x L^\infty_T} dt'\\
\notag \leq & C \| D_x^{1/4} u_0 \|_{L^2}+C \int_0^T \| D^{1/4}_x (v^2 \partial_x v(t'))\|_{L^2_x} dt'\\
\notag \leq & C \| D_x^{1/4} u_0 \|_{L^2}+C \int_0^T \| (v^2\partial_x v)(t')\|_{L^2}dt'+C\int_0^T \| \partial_x (v^2 \partial_x v)(t')\|_{L^2} dt'\\
\leq & C \| u_0 \|_{H^2}+CT (\Lambda^T(v))^3. \label{mdes4}
\end{align}

From \eqref{mdes1}-\eqref{mdes4} we obtain
\begin{align}
\Lambda^T(\Psi(v))\leq C(1+T)^\rho \| u_0\|_{H^2}+C T^{5/8} [T^{1/8}(T^{1/4}+1)+(1+T)^\rho(1+T^{3/8})+T^{3/8}] (\Lambda^T(v))^3. \label{mdes5}
\end{align}
Let us take $a:=2C(1+T)^\rho \| u_0\|_{H^2}$ and $T$ in such a way that
\begin{align}
C T^{5/8} [T^{1/8}(T^{1/4}+1)+(1+T)^\rho(1+T^{3/8})+T^{3/8}] a^2\leq \dfrac 12. \label{mdes6}
\end{align}
Since $\Lambda^T(v)\leq a$, from \eqref{mdes5} and \eqref{mdes6}, we have that
$$\Lambda^T(\Psi(v))\leq\dfrac a2+\dfrac 12(\Lambda^T(v))\leq \dfrac a2+\dfrac a2=a,$$
i.e. $\Psi(Y_T^a)\subset Y_T^a$.
Now we will find an additional condition on the size of $T$ in order to guarantee that the operator $\Psi: Y_T^a\to Y_T^a$ is a contraction. Let $v,w\in Y_T^a$. Then
\begin{align*}
\Psi(w)-\Psi(v)=\int_0^t W(t-t')(v^2\partial_x (v-w))(t') dt'+\int_0^t W(t-t')((v+w)(v-w)\partial_x w)(t') dt'.
\end{align*}
Therefore, proceeding as before:
\begin{align}
\notag \lambda_1^T(\Psi(w)-\Psi(v))\leq &\lambda_1^T\left(\int_0^t W(t-t')(v^2\partial_x (v-w))(t') dt'\right)\\
\notag &+\lambda_1^T\left(\int_0^t W(t-t')((v+w)(v-w)\partial_x w)(t') dt'\right)\\
\notag \leq & CT(\lambda_1^T(v))^2\lambda_1^T(v-w)+CT^{3/4}(\lambda_3^T(v))^2 \lambda_1^T(v-w)^{3/4} \lambda_2^T(v-w)^{1/4}\\
\notag &+ CT \lambda_1^T(v+w) \lambda_1^T(v-w)\lambda_1^T(w)\\
\notag &+CT^{3/4}\lambda_3^T(v+w)\lambda_3^T(v-w)\lambda_1^T(w)^{3/4}\lambda_2^T(w)^{1/4}\\
\notag \leq&C[(T+T^{3/4})\Lambda^T(v)^2+T \Lambda^T(v+w)\Lambda^T(w)\\
\notag&+T^{3/4}\Lambda^T(v+w)\Lambda^T(w)]\Lambda^T(v-w)\\
\leq& C(T+T^{3/4})(\Lambda^T(v)^2+\Lambda^T(w)^2)\Lambda^T(v-w),\label{mdes7}
\end{align}
\begin{align}
\notag \lambda_2^T(\Psi(w)-\Psi(v))\leq &CT^{3/4}\|v \|^2_{L^{16/5}_x L^\infty_T}\|v-w \|^{3/4}_{L^\infty_T L^2_x} \| \partial_x^4(v-w)\|^{1/4}_{L^\infty_x L^2_T}\\
\notag &+CT^{3/4}\| v+w\|_{L^{16/5}_x L^\infty_T}\|v-w \|_{L^{16/5}_x L^\infty_T} \| w\|_{L^\infty_T L^2_x}^{3/4}\|\partial_x^4w \|_{L^\infty_x L^2_T}^{1/4}\\
\notag \leq & CT^{3/4}\Lambda^T (v)^2 \Lambda^T(v-w)+CT^{3/4}\Lambda^T(v+w)\Lambda^T(v-w)\Lambda^T(w)\\
\leq & CT^{3/4} (\Lambda^T(v)^2+\Lambda^T(w)^2) \Lambda^T(v-w),\label{mdes8}
\end{align}
\begin{align}
\notag \lambda_3^T(\Psi(w)-\Psi(v))\leq &C(1+T)^\rho T\lambda_1^T(v)^2 \lambda_1^T(v-w)\\
\notag &+C(1+T)^\rho T^{1/2}\|v^2 \|_{L^{16/7}_x L^8_T} \| \partial_x^3 (v-w)\|_{L^{16}_x L^{8/3}_T}\\
\notag&+C(1+T)^\rho T \lambda_1^T(v+w)\lambda_1^T(v-w)\lambda_1^T(w)\\
\notag &+C(1+T)^\rho T^{1/2}\| (v+w)(v-w)\|_{L^{16/7}_x L^8_T}\| \partial_x^3 w\|_{L^{16}_x L^{8/3}_T}\\
\notag \leq& C(1+T)^\rho T\lambda_1^T(v)^2 \lambda_1^T(v-w)\\
\notag &+C(1+T)^\rho T^{1/2} \lambda_3^T(v)T^{1/8}\lambda_1^T(v)\lambda_4^T(v-w)^{1/4}\lambda_2^T(v-w)^{3/4}\\
\notag &+C(1+T)^\rho T\lambda_1^T(v+w)\lambda_1^T(v-w)\lambda_1^T(w)\\
\notag &+C(1+T)^\rho T^{1/2} \lambda_3^T(v+w)T^{1/8}\lambda_1^T(v-w)\lambda_4^T(w)^{1/4}\lambda_2^T(w)^{3/4}\\
\leq &CT^{5/8}(1+T)^\rho(1+T^{3/8})(\Lambda^T(v)^2+\Lambda^T(w)^2)\Lambda^T(v-w),\label{mdes9}
\end{align}
\begin{align}
\notag \lambda_4^T(\Psi(w)-\Psi(v))\leq &CT\lambda_1^T(v)^2\lambda_1^T(v-w)+CT\lambda_1^T(v+w)\lambda_1^T(v-w)\lambda_1^T(w)\\
\leq & CT(\Lambda^T(v)^2+\Lambda^T(w)^2)\Lambda^T(v-w).\label{mdes10}
\end{align}
From \eqref{mdes7} to \eqref{mdes10} it follows that
\begin{align}
\notag \Lambda^T(\Psi(w)-\Psi(v))\leq& C(2T+2T^{3/4}+T^{5/8}(1+T)^\rho(1+T^{3/8}))(\Lambda^T(v)^2+\Lambda^T(w)^2)\Lambda^T(v-w)\\
\leq & C(2T+2T^{3/4}+T^{5/8}(1+T)^\rho(1+T^{3/8})) 2a^2\lambda^T(v-w).
\end{align}
If $a:=2C(1+T)^\rho \| u_0\|_{H^2}$ and $T$ are taken satisfying \eqref{mdes6} and the additional condition
\begin{align*}
C(2T+2T^{3/4}+T^{5/8}(1+T)^\rho(1+T^{3/8})) 2a^2<1,
\end{align*}
then $\Psi:Y_T^a\to Y_T^a$ is a contraction. Hence, there exists a unique $u\in Y_T^a$ such that $\Psi(u)=u$.

From this point, proceeding in a similar way as it was done in \cite{L} we can conclude that, given $u_0\in H^2(\mathbb R)$, there exist $T=T(\|u_0 \|_{H^2})>0$ and a unique $u$, solution of the IVP \eqref{KdV} for the modified fifth order KdV equation ($k=2$), such that
\begin{align}
u\in C([0,T]; H^2(\mathbb R)) \label{uinc}
\end{align}
and $u$ satisfies the conditions \eqref{mdes12}, \eqref{mdes13} and \eqref{mdes14}. Moreover, for any $T'\in(0,T)$ there exists a neighborhood $V_1$ of $u_0$ in $H^2(\mathbb R)$, such that the data-solution map $\tilde u_0\mapsto u_0$ from $V_1$ into the class defined by \eqref{uinc}, \eqref{mdes12}, \eqref{mdes13} and \eqref{mdes14} with $T'$ instead of $T$ is Lipschitz.
If additionally, we have that $u_0\in Z_{2,1/2}$, then reasoning as in the proof of Theorem \ref{prin1} (case $r\geq 1/2$) we obtain that $u\in C([0,T];L^2(\langle x\rangle) dx)$, and that there exists a neighborhood $V$ of $u_0$ in $Z_{2,1/2}$ such that the data-solution map $\tilde u_0\mapsto u_0$ from $V$ into the class defined by \eqref{mdes11} to \eqref{mdes14} with $T'$ instead of $T$ is Lipschitz.\qed

\subsection{Proof of Theorem \ref{tmkdv2}} From Theorem \ref{tmkdv1} to prove that the IVP \eqref{KdV} (with $k=2$) is GWP in $Z_{2,1/2}$, it is sufficient to establish that this IVP is GWP in $H^2(\mathbb R)$. Reasoning as in the proof of Theorem \ref{prin2} it is enough to show that if $u\in C([0,T];H^2(\mathbb R))$ is a solution of the IVP \eqref{KdV} with $k=2$, then for every $t\in [0,T]$
\begin{equation}
\| u(t) \|_{H^2}^2\leq K\equiv K(\|u_0 \|_{H^2}),\label{mEstAprio}
\end{equation}
where $K$ only depends on $\| u_0\|_{H^2(\mathbb R)}$.

From \eqref{des1} and the conservation law \eqref{cl1} it is clear that \eqref{mEstAprio} holds if we prove that, for every $t\in[0,T]$,
$$\| \partial_x^2 u(t)\|^2_{L^2}\leq K\equiv K(\|u_0 \|_{H^2}).$$

By the conservation law \eqref{mcl2} we have that
$$\| \partial_x^2 u(t) \|_{L^2}^2=\dfrac 1{12} \int_\mathbb R u_0^4 dx+\int_\mathbb R (\partial_x^2 u_0)^2 dx-\dfrac 1{12}\int_\mathbb R u^4(t) dx,$$
and, since the last term in the  side of the above equality is non-positive, we obtain that
\begin{align*}
\| \partial_x^2 u(t)\|_{L^2}^2&\leq \dfrac 1{12}\int_\mathbb R u_0^4 dx+\int_\mathbb R (\partial_x^2 u_0)^2 dx\leq \dfrac 1{12} \| u_0\|_{L^\infty}^2 \int_\mathbb R u_0^2 dx +\int_\mathbb R (\partial_x^2 u_0)^2 dx.
\end{align*}
Taking into account that $\| u_0\|_{L^\infty}^2\leq C\|u_0 \|_{H^2}^2$, we have
$$\| \partial_x^2 u(t)\|_{L^2}^2\leq C\| u_0\|_{H^2}^2 \int_\mathbb R u_0^2 dx+\int_\mathbb R (\partial_x^2 u_0)^2 dx\leq C\|u_0 \|_{H^2}^2 \| u_0\|^2_{H^2}+\| u_0\|^2_{H^2}\equiv K(\| u_0\|_{H^2}).$$
\qed

\section{Relation Between Decay and Regularity\\for the Solution of the IVP \eqref{KdV} with $k=2$\\(Proof of Theorem \ref{tmkdv3})}

Let us suppose that $t_0=0$ and let $u_0=u(0)$. For $x\geq 0$ and $N\in\mathbb N$ let us  define $\varphi_{N,\alpha}\in C^5([0,\infty))$ such that
$$\varphi_{N,\alpha}(x)=\left\{ \begin{array}{ll} (1+x^2)^{\alpha+1/2}-1 & \text{ if } x\in[0,N],\\(2N^2)^{\alpha+1/2} & \text{ if } x\geq10N, \end{array}\right.$$
$\varphi^{(1)}_{N,\alpha}(x)\geq 0$ and $|\varphi_{N,\alpha}^{(j)}(x)|\leq C$ for $j=2,3,4,5$, with $C$ independent of $N$.

Let $\phi_N\equiv \phi_{N,\alpha}$ be the odd extension of $\varphi_{N,\alpha}$ to $\mathbb R$.

Since $C_0^\infty(\mathbb R)$ is dense in $Z_{2,1/2}$, there exist a sequence $\{u_{0m} \}_{m\in\mathbb N}$ in $C_0^\infty(\mathbb R)$ such that
\begin{align}
\| u_0-u_{0m}\|_{Z_{2,1/2}}\to 0\label{seq}
\end{align}
as $m\to\infty$.

Let $u_m$ be the solution of the modified fifth order KdV equation such that $u_m(0)=u_{0m}$. By Theorems \ref{tmkdv1} and \ref{tmkdv2} we have that
\begin{align}
\| u_m -u\|_{C([0,T];Z_{2,1/2})} &\to 0, \text{ and } \label{seq2}\\
\|\partial_x^4 u_m-\partial_x^4 u \|_{L^\infty_x L^2_T}  &\to 0, \label{seq3}
\end{align}
as $m\to\infty$.

Since $u_{0m}\in H^s(\mathbb R)$ for each $s\in\mathbb R$, it can be seen (regularity property), that $u_m(t)\in H^s(\mathbb R)$ for each $s\in\mathbb R$ and each $t\in[0,T]$.

We now multiply the equation $\partial_t u_m+\partial_x^5 u_m+u_m^2 \partial_x u_m=0$ by $u_m \phi_N$, integrate in $x$ over $\mathbb R$ and apply integration by parts to obtain
\begin{align}
\dfrac d{dt} \int_\mathbb R u_m^2 \phi_N dx- 5\int_\mathbb R (\partial^2_x u_m)^2 \phi_N^{(1)} dx+5\int_\mathbb R (\partial_x u_m)^2 \phi_N^{(3)}dx-\int_\mathbb R u_m^2 \phi_N^{(5)}dx-\dfrac 12 \int_\mathbb R u_m^4 \phi_N^{(1)} dx=0. \label{mdes15}
\end{align}
(In the above equation we use the notation $u_m\equiv u_m(t)$).

From convergence in \eqref{seq2}, since $\alpha\leq 1/2$, it is clear that, for $t\in[0,T]$,
\begin{align}
\notag \left|- \dfrac 12 \int_\mathbb R u_m^4(t) \phi_N^{(1)} dx\right|&\leq C\| u_m(t)\|^2_{L^\infty} \int_\mathbb R u_m^2(t) \langle x \rangle^{2\alpha}dx\leq C\| u_m(t)\|^2_{H^2} \int_\mathbb R u_m^2(t) \langle x \rangle^{2\alpha}dx\\
&\leq C\sup_{t\in[0,T]}\| u_m(t)\|^2_{H^2} \| u_m(t)\|^2_{L^2(\langle x\rangle dx)}\leq C. \label{mdes16}
\end{align}
On the other hand, it is also clear that
\begin{align}
\left|-\int_\mathbb R u_m^2 \phi_N^{(5)}dx\right|\leq C\text{ and } \left|5\int_\mathbb R (\partial_x u_m)^2 \phi_N^{(3)}dx\right|\leq C. \label{mdes17}
\end{align}
Integrating \eqref{mdes15} in $t$ over the interval $[0,t_1]$ and taking into account the inequalities \eqref{mdes16} and \eqref{mdes17} we can conclude that
$$5\int_0^{t_1} \int_\mathbb R (\partial_x^2 u_m)^2 \phi_N^{(1)} dxdt\leq \| u_m^2(t_1)\phi_N\|_{L^1}+\|u_m^2(0) \phi_N \|_{L^1}+Ct_1.$$
Hence
\begin{align}
\limsup_{m\to \infty}\int_0^{t_1} \int_\mathbb R (\partial_x^2 u_m)^2 \phi_N^{(1)} dxdt\leq \dfrac 15 \| u^2(t_1)\phi_N\|_{L^1}+\dfrac 15\|u^2(0) \phi_N \|_{L^1}+Ct_1\leq M, \label{mdes18}
\end{align}
where $M=M(\| \langle x \rangle^{\frac 12 +\alpha}u(0)\|_{L^2}+\| \langle x \rangle^{\frac 12 +\alpha}u(t_1)\|_{L^2})$.

Taking into account that $\phi_N^{(1)}$ is a bounded function, the convergence in \eqref{seq2} implies that
$$\int_0^{t_1}\int_\mathbb R (\partial_x^2 u_m)^2 \phi_N^{(1)} dx dt \to \int_0^{t_1}\int_\mathbb R (\partial_x^2 u)^2\phi_N^{(1)} dx dt,$$
as $m\to\infty$. Therefore, from \eqref{mdes18}, we obtain that
\begin{align}
\int_0^{t_1}\int_\mathbb R (\partial_x^2 u)^2 \phi_N^{(1)} dx dt\leq M. \label{mdes19}
\end{align}
Since $\phi_N^{(1)}$ is an even function, $\phi_N^{(1)}(x)\geq 0$ and, for $x>1$,
$$\phi_N^{(1)}(x)\to (2\alpha+1)(1+x^2)^{\alpha-\frac 12}x\sim \langle x \rangle^{2\alpha}, $$
as $N\to\infty$, applying Fatou's Lemma in \eqref{mdes19}, we have that
$$\int_0^{t_1}\int_{|x|\geq 1} (\partial_x^2 u)^2\langle x \rangle^{2\alpha} dx dt\leq CM,$$
and taking into account that
$$\int_0^{t_1} \int_{|x|\leq 1} (\partial_x^2 u)^2 \langle x \rangle^{2\alpha} dx dt\leq C,$$
we obtain that
\begin{align}
\int_0^{t_1} \int_\mathbb R (\partial_x^2 u)^2 \langle x\rangle^{2\alpha} dx dt\leq C+CM<\infty.\label{mdes20}
\end{align}
From \eqref{mdes20} it follows that
\begin{align}
\partial_x^2 u(t)\in L^2(\langle x \rangle^{2\alpha}dx),\; \text{a.e. } t\in[t_0,t_1]. \label{mdes21}
\end{align}
Let us define, for $t\in[t_0,t_1]$, $w(t):=\partial_x^2 u(t)$ and $w_m(t):=\partial_x^2 u_m(t)$. Then we have that
\begin{align}
\partial_t w_m+\partial_x^5 w_m+u_m^2\partial_x w_m+6 u_m\partial_x u_m w_m+2(\partial_x u_m)^3=0.\label{mdes22}
\end{align}
For $x\geq 0$ and $N\in\mathbb N$ let us define $\tilde \varphi_{N,\alpha}\in C^5([0,\infty))$ such that
$$\tilde\varphi_{N,\alpha}(x)=\left\{\begin{array}{ll} (1+x^2)^\alpha-1 & \text{if } x\in[0,N],\\ (2N^2)^\alpha & \text{if } x\geq 10N, \end{array} \right.$$
$\tilde\varphi_{N,\alpha}^{(1)}(x)\geq 0$ and $|\tilde\varphi_{N,\alpha}^{(j)}(x)|\leq C$ for $j=1,\dots, 5$, with $C$ independent of $N$.

Let $\tilde\phi_N\equiv \tilde\phi_{N,a}$ be the odd extension of $\tilde\varphi_{N,\alpha}$ to $\mathbb R$. Multiplying equation \eqref{mdes22} by $w_m\tilde\phi_N$, integrating in $x$ over $\mathbb R$, using integration by parts, and then integrating in $t$ over an interval $[t_0^*,t_1^*]\subset [t_0,t_1]$ such that $\partial_x^2 u(t_0^*), \partial_x^2 u(t_1^*)\in L^2(\langle x\rangle^{2\alpha} dx)$, we obtain
\begin{align}
\notag \int_{t_0^*}^{t_1^*} \int_\mathbb R (\partial_x^2 w_m)^2 \tilde\phi_N^{(1)}dx dt=&\dfrac 15\int_\mathbb R w_m^2(t_1^*) \tilde\phi_N dx-\dfrac 15 \int_\mathbb R w_m^2(t_0^*) \tilde\phi_N dx\\
\notag&+\int_{t_0^*}^{t_1^*}\int_\mathbb R (\partial_x w_m)^2(t) \tilde\phi_N^{(3)} dx dt-\dfrac 15 \int_{t_0^*}^{t_1^*}\int_\mathbb R w_m^2(t) \tilde\phi_N^{(5)} dx dt\\
\notag&-\dfrac 15 \int_{t_0^*}^{t_1^*} \int_\mathbb R u_m^2(t) w_m^2(t) \tilde\phi_N^{(1)} dx dt+2\int_{t_0^*}^{t_1^*}\int_\mathbb R u_m(t)\partial_x u_m(t) w_m^2(t) \tilde\phi_N dx dt\\
&+\dfrac 45 \int_{t_0^*}^{t_1^*} \int_\mathbb R (\partial_x u_m)^3(t) w_m(t) \tilde\phi_N dx dt.\label{mdes23}
\end{align}
From \eqref{mdes23} we will prove that
$$\int_{t_0^*}^{t_1^*}\int_\mathbb R (\partial_x^2 w)^2(t) \langle x \rangle^{2\alpha-1}dx dt <\infty.$$
Let us observe that
$$\int_{t_0^*}^{t_1^*}\int_\mathbb R (\partial_x w_m)^2(t) \tilde\phi_N^{(3)} dx dt=-\int_{t_0^*}^{t_1^*} \int_\mathbb R w_m(t)\partial_x^2 w_m(t) \tilde\phi_N^{(3)}dx dt+\dfrac 12 \int_{t_0^*}^{t_1^*}\int_\mathbb R w_m^2(t)\tilde\phi_N^{(5)}dx dt.$$
Let $K$ be a constant independent of $N$ such that $|\tilde\phi_N^{(3)}|\leq K\tilde\phi_N^{(1)}$. Then
\begin{align}
\notag \int_{t_0^*}^{t_1^*} \int_\mathbb R(\partial_x w_m)^2(t) \tilde\phi_N^{(3)} dx dt \leq &\dfrac 12 \int_{t_0^*}^{t_1^*} \int_\mathbb R(K w_m^2(t)+\dfrac{(\partial_x^2 w_m)^2(t)}K)|\tilde\phi_N^{(3)}|dx dt\\
\notag &+\dfrac 12 \int_{t_0^*}^{t_1^*} \int_\mathbb R w_m^2(t) \tilde\phi_N^{(5)} dx dt\\
\notag \leq & \dfrac K2 \int_{t_0^*}^{t_1^*} \int_\mathbb R w_m^2(t) |\tilde\phi_N^{(3)}|dx dt+\dfrac 12 \int_{t_0^*}^{t_1^*} \int_\mathbb R(\partial_x^2 w_m)^2(t) \tilde\phi_N^{(1)}dx dt\\
&+\dfrac 12 \int_{t_0^*}^{t_1^*} \int_\mathbb R w_m^2(t) \tilde\phi_N^{(5)} dx dt.\label{mdes24}
\end{align}
Taking into account that the fifth term of the  side of \eqref{mdes23} is not positive, from \eqref{mdes23} and \eqref{mdes24} it follows that
\begin{align}
\notag \dfrac 12 \int_{t_0^*}^{t_1^*} \int_\mathbb R(\partial_x^2 w_m)^2(t) \tilde\phi_N^{(1)} dx dt \leq & \dfrac 15 \int_\mathbb R w_m^2(t_1^*) \tilde\phi_N dx-\dfrac 15 \int_\mathbb R w_m^2(t_0^*)\tilde\phi_N dx\\
\notag &+\dfrac K2 \int_{t_0^*}^{t_1^*} \int_\mathbb R w_m^2(t) |\tilde\phi_N^{(3)}| dx dt+\dfrac 3{10} \int_{t_0^*}^{t_1^*} \int_\mathbb Rw_m^2(t) \tilde\phi_N^{(5)} dx dt\\
\notag &+2 \int_{t_0^*}^{t_1^*} \int_\mathbb R u_m(t) \partial_x u_m(t) w_m^2(t) \tilde\phi_N dx dt\\
\notag &+\dfrac 45\int_{t_0^*}^{t_1^*} \int_\mathbb R(\partial_x u_m)^3(t) w_m(t) \tilde\phi_N dx dt\\
\equiv &\dfrac 15 \int_\mathbb R w_m^2(t_1^*) dx-\dfrac 15 \int_\mathbb R w_m^2(t_0^*)\tilde\phi_N dx+I+II+III+IV. \label{mdes25}
\end{align}
Since $\tilde\phi_N^{(3)}$ and $\tilde\phi_N^{(5)}$ are bounded functions, the convergence in \eqref{seq2} implies that
\begin{align}
I+II\leq C.\label{mdes26}
\end{align}
We now estimate $III+IV$. Using the convergence \eqref{seq2}, and the boundedness of the function $\tilde\phi_N^{(1)}$, we obtain, for $\alpha\in(0,1/4]$, that
\begin{align}
\notag III+IV\leq &C\int_{t_0^*}^{t_1^*} \| u_m(t)\|_{H^2}^3 \|u_m(t) \tilde\phi_N\|_{L^\infty} dt+C\int_{t_0^*}^{t_1^*}\| u_m(t)\|_{H^2}^2\int_\mathbb R |w_m(t)| |\partial_x u_m(t)| |\tilde\phi_N| dx dt\\
\notag \leq& C \int_{t_0^*}^{t_1^*} \| u_m(t) \tilde\phi_N \|_{H^1} dt +C\int_{t_0^*}^{t_1^*} \|w_m(t) \|_{L^2}\|\partial_x u_m(t) \tilde\phi_N  \|_{L^2}dt\\
\notag \leq & C\int_{t_0^*}^{t_1^*} (\| u_m(t) \tilde\phi_N \|_{H^1}+\| \partial_x u_m(t) \tilde\phi_N \|_{L^2})dt\\
\notag \leq &C \int_{t_0^*}^{t_1^*} (\| u_m(t) \tilde\phi_N\|_{L^2}+\| \partial_x u_m(t) \tilde\phi_N \|_{L^2}+\| u_m(t) \tilde\phi_N^{(1)}\|_{L^2}) dt\\
\notag \leq &C \int_{t_0^*}^{t_1^*} (\| u_m(t) \|_{L^2(\langle x\rangle^{4\alpha}dx)}+\| \partial_x u_m(t) \|_{L^2(\langle x\rangle^{4\alpha}dx)}) dt+C\\
\leq & C+C\int_{t_0^*}^{t_1^*} \| \partial_x u_m(t) \|_{L^2(\langle x\rangle^{4\alpha}dx)}dt. \label{mdes27}
\end{align}
Using Lemmas \ref{dos} and \ref{uno} and the convergence in \eqref{seq2} it follows, for $\alpha\in(0,1/8]$ and $t\in[t_0^*,t_1^*]\subset[0,T]$, that
\begin{align}
\notag\| \partial_x u_m(t)\|_{L^2(\langle x \rangle^{4\alpha}dx)}=&\|\langle x \rangle^{2\alpha} \partial_x u_m(t) \|_{L^2}\leq \|\langle x\rangle^{1/4} \partial_x u_m(t) \|_{L^2} \leq C\| J(\langle x \rangle^{1/4} u_m(t))\|_{L^2}\\
\leq & C\| \langle x \rangle^{1/2} u_m(t)\|_{L^2}^{1/2}\| J^2 u_m(t)\|_{L^2}^{1/2} \leq C.\label{mdes28}
\end{align}
From \eqref{mdes25}-\eqref{mdes28} we conclude that, if $\alpha\in(0,1/8]$ then
\begin{align}
\dfrac 12 \int_{t_0^*}^{t_1^*} \int_\mathbb R (\partial_x^2 w_m)^2(t) \tilde\phi_N^{(1)} dx dt \leq \dfrac 15 \int_\mathbb R w_m^2(t_1^*)\tilde\phi_N-\dfrac 15 \int_\mathbb R w_m^2(t_0^*)\tilde\phi_N +C, \label{mdes29}
\end{align}
where $C=C(T)$ is a constant indepent of $N$ and $m$.

Let us observe that from the convergence in \eqref{seq3} we can conclude that
$$\sup_{x\in\mathbb R}\int_0^T |\partial_x^2 w_m(x,t)-\partial_x^2 w(x,t)|^2dt=\| \partial_x^4 u_m-\partial_x^4 u\|_{L^\infty_x L^2_T}^2\to 0,$$
as $m\to\infty$.

Hence, denoting by $|A|$ the measure of a set $A\subset \mathbb R$, we have
\begin{align*}
\Big|\int_{t_0^*}^{t_1^*}\int_\mathbb R[(\partial_x^2 w_m)^2(x,t)&-(\partial_x^2 w)^2(x,t)] \tilde \phi_N^{(1)} dx dt \Big| \\
 &\leq C \int_{supp\, \tilde\phi_N^{(1)}} \int_{t_0^*}^{t_1^*} |\partial_x^2 w_m-\partial_x^2 w| |\partial_x^2 w_m+\partial_x^2 w| dt dx\\
 &\leq C \int _{supp\, \tilde\phi_N^{(1)}} \| \partial_x^2 w_m(x,\cdot)-\partial_x^2 w(x,\cdot)\|_{L^2_T}\| \partial_x^2 w_m(x,\cdot)+\partial_x^2 w(x,\cdot)\|_{L^2_T }dx\\
 &\leq C \| \partial_x^2 w_m-\partial_x^2 w\|_{L^\infty_x L^2_T} \| \partial_x^2 w_m +\partial_x^2 w\|_{L^\infty_x L^2_T} |supp\, \tilde\phi_N^{(1)}| \\
 &\leq C \| \partial_x^2 w_m-\partial_x^2 w\|_{L^\infty_x L^2_T}\to 0,
\end{align*}
as $m\to \infty$.

Therefore, passing to the limit in \eqref{mdes29}, as $m\to\infty$, we obtain that
\begin{align}
\notag \dfrac 12 \int_{t_0^*}^{t_1^*} \int_\mathbb R (\partial_x^2 w)^2(t) \tilde\phi_N ^{(1)} dx dt &\leq \dfrac 15 \int_\mathbb R w^2(t_1^*) \tilde\phi_N -\dfrac 15 \int_\mathbb R w^2(t_0^*) \tilde\phi_N +C\\
&\leq C\int_\mathbb R w^2(t_1^*) \langle x\rangle^{2\alpha} dx+C\int_\mathbb R w^2(t_0^*)\langle x \rangle ^{2\alpha} +C\equiv M.\label{mdes30} 
\end{align}
Since $\tilde\phi_N^{(1)}$ is an even function, $\tilde\phi_N^{(1)}\geq 0$ and, for $x\geq 1$,
$$\tilde\phi_N^{(1)}(x)\to 2\alpha \langle x \rangle^{2\alpha-1} \langle x \rangle' \sim \langle x \rangle^{2\alpha-1},$$
as $N\to\infty$, applying Fatou's Lemma in \eqref{mdes30} we can conclude that
$$\int_{t_0^*}^{t_1^*} \int_{|x|\geq 1} (\partial_x^2 w)^2(t)\langle x\rangle^{2\alpha-1} dx dt\leq CM.$$
On the other hand, since $2\alpha-1\leq 0$,
$$\int_{t_0^*}^{t_1^*} \int_{|x|\leq 1} (\partial_x^2 w)^2(t)\langle x\rangle^{2\alpha-1} dx dt\leq  \int_{t_0^*}^{t_1^*} \int_{|x|\leq 1} (\partial_x^2 w)^2(t) dx dt\leq 2 \| \partial_x^2 w\|^2_{L^\infty_x L^2_T}=2\|\partial_x^4 u \|^2_{L^\infty_x L^2_T}<\infty.$$
In consequence
\begin{align}
\int_{t_0^*}^{t_1^*} \int_\mathbb R (\partial_x^2 w)^2(t) \langle x \rangle^{2\alpha-1} dx dt <\infty.\label{mdes31}
\end{align}
From \eqref{mdes31} it follows that
$$\partial_x^2 w(t) \langle x \rangle ^{\alpha-1/2}\in L^2(\mathbb R), \text{ a.e. } t\in[t_0^*,t_1^*].$$
This fact and \eqref{mdes21} imply that
$$\langle x \rangle^\alpha w(t) \in L^2 \text{ and } J^2(\langle x\rangle^{\alpha-1/2} w(t))\in L^2, \text{ a.e. } t\in[t_0^*,t_1^*].$$
Let us define $f:=\langle x \rangle^{\alpha-1/2} w(t)$. Then
$$\langle x \rangle^{1/2} f\in L^2\text{ and } J^2(f)\in L^2.$$
Using Lemma \ref{uno} with $a=2$ and $b=1/2$ we conclude that, for $\theta\in[0,1]$,
$$\|J^{2\theta}(\langle x\rangle^{(1-\theta)/2} f) \|_{L^2}\leq C\| \langle x \rangle^{1/2} f\|_{L^2}^{(1-\theta)}\|J^2(f) \|^\theta_{L^2}.$$
Taking $\theta=2\alpha$, we have that $\| J^{4\alpha} w(t)\|_{L^2}<\infty$, a.e. $t\in [t_0^*,t_1^*],$ i.e., $w(t)\in H^{4\alpha}$, a.e. $t\in[t_0^*,t_1^*]$, i.e., $u(t)\in H^{2+4\alpha}$.

The end of the proof is as in \cite{ILP} and in consequence we omit the details.\qed

\end{document}